\setlist[description]{leftmargin=\parindent,labelindent=\parindent}
\newtheorem{thm}{Theorem}
\newtheorem{prop}[thm]{Proposition}
\newtheorem{lem}[thm]{Lemma}
\newtheorem{conj}[thm]{Conjecture}
\theoremstyle{definition}
\newtheorem{rem}[thm]{Remark}
\renewcommand{\H}{\mathsf{H}}
\newcommand{\zz}{\mathbb{Z}}
\newcommand{\qq}{\mathbb{Q}}
\newcommand{\R}{\mathsf{R}}
\newcommand{\RH}{\mathsf{RH}}
\newcommand{\Bb}{\overline{\mathcal{B}}}
\newcommand{\Hb}{\overline{\mathcal{H}}}
\newcommand{\M}{\mathcal{M}}
\DeclareMathOperator{\ct}{ct}
\renewcommand{\tilde}{\widetilde}
\DeclareMathOperator{\im}{Im}
\newcommand{\Mb}{\overline{\M}}
\title{The tautological ring of $\Mb_{g,n}$ is rarely Gorenstein}
\author{Samir Canning}
\begin{document}

\begin{abstract}
We prove that the tautological rings $\R^*(\Mb_{g,n})$ and $\RH^*(\Mb_{g,n})$ are not Gorenstein when $g\geq 2$ and $2g+n\geq 24$, extending results of Petersen and Tommasi in genus $2$. The proof uses the intersection of tautological classes with non-tautological bielliptic cycles. We conjecture the converse: the tautological rings should be Gorenstein when $g=0,1$ or $g\geq 2$ and $2g+n<24$. The conjecture is known for $g=0,1$ by work of Keel and Petersen, and we prove several new cases of this conjecture for $\RH^*(\Mb_{g,n})$ when $g\geq 2$.
\end{abstract}

\maketitle
\section{Introduction}
Let $\Mb_{g,n}$ denote the moduli space of stable curves of genus $g$ with $n$ markings. The tautological ring $\R^*(\Mb_{g,n})$ is the $\qq$-subalgebra of the Chow ring of $\Mb_{g,n}$ generated by the decorated strata classes, and its image in cohomology under the cycle class map is $\RH^*(\Mb_{g,n})$. Decorated strata classes are Chow classes corresponding to stable graphs of genus $g$ with $n$ legs, decorated by $\kappa$ classes on the vertices and $\psi$ classes on the legs and half-edges \cite{GraberPandharipande,Pandharipandecalculus}. Finding a complete set of relations among these generators is a fundamental open problem in the intersection theory of $\Mb_{g,n}$. Pixton \cite{Pixton} conjectured relations among the tautological generators\,---\,the \emph{3-spin relations}\footnote{The $3$-spin relations are also known as the generalized Faber--Zagier relations or Pixton's relations.}\,---\,that were shown to hold in cohomology by Pandharipande--Pixton--Zvonkine \cite{PandharipandePixtonZvonkine} and in Chow by Janda \cite{Janda}. No other relations have been found, despite significant investigation \cite{CladerJanda,Jandarelations}.
\begin{conj}[Pixton]\label{Pixton}
The $3$-spin relations are complete for $\R^*(\Mb_{g,n})$ and $\RH^*(\Mb_{g,n})$.
\end{conj}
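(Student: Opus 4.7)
This statement is a major open conjecture, so what follows is a research program rather than a complete argument. Since the 3-spin relations hold in both $\R^*(\Mb_{g,n})$ \cite{Janda} and $\RH^*(\Mb_{g,n})$ \cite{PandharipandePixtonZvonkine}, they yield an explicit, algorithmically computable upper bound $v^k = v^k(g,n)$ on $\dim_\qq \R^k(\Mb_{g,n})$ (and on $\dim_\qq \RH^k(\Mb_{g,n})$). Pixton's conjecture is thus equivalent to the reverse lower bound $\dim_\qq \R^k(\Mb_{g,n}) \geq v^k$ in every codimension $k$, and the goal is to exhibit, for each $(g,n,k)$, a set of $v^k$ linearly independent tautological classes.

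The plan is to detect linear independence by pairing with test classes of complementary dimension $d-k$, where $d=\dim\Mb_{g,n}$: given a set $\{\alpha_j\}$ of tautological classes representing generators of the 3-spin quotient in codimension $k$ and test classes $\{\beta_i\}$ in codimension $d-k$, one computes the intersection matrix $M_{ij} = \int_{\Mb_{g,n}}\alpha_j\cdot\beta_i$ and is done if $\rank M = v^k$. The fundamental difficulty exposed by the present paper is that the $\beta_i$ cannot in general be drawn from $\R^{d-k}$ itself: the main non-Gorenstein theorem says precisely that the tautological pairing $\R^k\times\R^{d-k}\to\qq$ fails to be perfect for $g\geq 2$ and $2g+n\geq 24$. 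Accordingly, one must draw the $\beta_i$ from outside the tautological ring---natural candidates are the bielliptic cycles used in this paper, classes arising from the Torelli map to moduli of abelian varieties, or non-tautological cycles supported on Brill--Noether strata.

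I would organize the argument by induction on $2g+n$, using boundary gluing and forgetful maps. The usual filtration by boundary codimension expresses the boundary-supported part of $\R^*(\Mb_{g,n})$ in terms of tautological rings on products of lower-dimensional moduli, so the inductive hypothesis handles this part and reduces the problem to the open stratum $\M_{g,n}$. The inductive task is then to produce enough non-tautological detecting classes whose restrictions to $\M_{g,n}$ pair non-trivially with the interior-type tautological classes to saturate the bound modulo the boundary contribution.

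The main obstacle is precisely this last step. The bielliptic cycles of this paper produce enough non-tautological classes to witness a one-sided pairing asymmetry and force non-Gorenstein behavior, but they fall well short of saturating the 3-spin bound for general $(g,n)$. A complete proof of the conjecture would likely require either a systematic geometric construction of non-tautological cycles whose supply grows with the graded dimensions $v^k$, or a genuinely different method---for instance a Hodge-theoretic or representation-theoretic lower bound on $\dim_\qq \RH^k(\Mb_{g,n})$---that bypasses explicit pairings altogether.
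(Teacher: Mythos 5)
This is Conjecture 1 in the paper---an open conjecture due to Pixton---so there is no proof in the paper to compare against, and you are right to frame your submission as a research program rather than an argument. Within that framing, your outline is sound and well-aligned with the paper's themes: the 3-spin relations give an explicit upper bound on the graded dimensions, so the conjecture is equivalent to the matching lower bound, and the failure of the Gorenstein property (Theorem~\ref{thm:main}) indeed forces any pairing-based detection of that lower bound to use classes outside $\R^{d-k}(\Mb_{g,n})$. Your proposal to pair against non-tautological cycles such as the bielliptic loci is exactly the mechanism the paper uses, though for a much more modest purpose: the paper only needs to show that \emph{one} specific tautological class in top minus one codimension is nonzero in order to establish non-Gorensteinness, whereas verifying Pixton's conjecture would require producing enough independent detecting classes to saturate the full 3-spin dimension in every codimension. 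You correctly identify this gap as the central obstruction, and the paper offers no route past it---indeed the paper explicitly notes that no case is currently known where Conjecture~\ref{Pixton} holds but the Gorenstein property fails for $\Mb_{g,n}$. So: no error, but also no proof; this remains a genuinely open problem, and your write-up is an honest map of the difficulty rather than a resolution of it.
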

 Before Pixton's conjecture, it had been speculated that the tautological ring is Gorenstein \cite{FPspeculation,HainLooijenga,Pan3questions}, extending Faber's conjecture on $\R^*(\M_g)$ \cite{Faber}. We say $\R^*(\Mb_{g,n})$ is Gorenstein if all the intersection pairings
\[
\R^i(\Mb_{g,n})\times \R^{3g-3+n-i}(\Mb_{g,n})\rightarrow \R^{3g-3+n}(\Mb_{g,n})\cong\qq
\]
are perfect, and similarly for $\RH^*(\Mb_{g,n})$. If the Gorenstein property holds, a tautological class is zero if and only if it intersects trivially with every tautological class in complementary codimension, thereby providing an algorithm to determine all of the relations in the tautological ring. The Gorenstein property holds in genus $0$ and $1$ \cite{Keel,Petersengenusone}, but Petersen and Tommasi \cite{PetersenTommasi} showed it fails in genus $2$ when there are sufficiently many marked points. Further investigation by Petersen \cite{Petersen} shows that $\R^*(\Mb_{2,n})$ and $\RH^*(\Mb_{2,n})$ are not Gorenstein when $n\geq 20$. Our main result is that the tautological rings are Gorenstein in only finitely many cases when $g\geq 2$. 
\begin{thm}\label{thm:main}
    Neither $\R^*(\Mb_{g,n})$ nor $\RH^*(\Mb_{g,n})$ is Gorenstein if $g\geq 2$ and $2g+n \geq 24$.
\end{thm}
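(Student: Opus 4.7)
The plan is to propagate Petersen's non-Gorenstein result on $\Mb_{2,n}$ (for $n\geq 20$) to the full range $g\geq 2$, $2g+n\geq 24$ by iterating self-gluing along a non-separating node. The key numerical observation is that the gluing map
\[
\xi\colon \Mb_{g-1,\,n+2}\to\Mb_{g,n}
\]
preserves the invariant $2g+n$, so every pair $(g,n)$ in the claimed range is reachable from $(2,\,2g+n-4)$ by exactly $g-2$ self-gluings, and $2g+n-4\geq 20$ in this range, so Petersen supplies the base case: a nonzero tautological class $\alpha_0$ on $\Mb_{2,2g+n-4}$ (working simultaneously in $\R^*$ and $\RH^*$) that annihilates every complementary tautological class, with nonvanishing witnessed by $\alpha_0\cdot B_0\neq 0$ for a non-tautological bielliptic cycle $B_0$.

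For the inductive step, assume the witness $(\alpha,B)$ has been constructed on $\Mb_{g-1,n+2}$, with $\alpha\in\R^c$. I would show that $\xi_*\alpha\in\R^{c+1}(\Mb_{g,n})$ again annihilates every complementary tautological class: for any tautological $\gamma\in\R^{3g-4+n-c}(\Mb_{g,n})$, the projection formula gives
\[
\xi_*\alpha\cdot\gamma \;=\; \xi_*\bigl(\alpha\cdot\xi^*\gamma\bigr),
\]
and $\xi^*\gamma$ is tautological on $\Mb_{g-1,n+2}$ of codimension complementary to $\alpha$, so the integrand vanishes by hypothesis. For nonvanishing, the natural candidate witness on $\Mb_{g,n}$ is $\xi_*B$; its non-tautological character should propagate from that of $B$ by a Hodge- or monodromy-theoretic argument in the style of Graber--Pandharipande and van Zelm, exploiting the extra involutions on bielliptic curves. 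Using that $\xi^*\xi_*$ is multiplication by $-(\psi_{\bullet}+\psi_{\bullet'})$ at the two glued half-edges, the intersection takes the form
\[
\xi_*\alpha\cdot\xi_*B \;=\; -\,\xi_*\bigl(\alpha\cdot B\cdot(\psi_{\bullet}+\psi_{\bullet'})\bigr),
\]
which should be nonzero provided the inductive hypothesis is formulated with sufficient freedom in the $\psi$-class content of $\alpha$ and $B$.

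The main obstacle is ensuring this nonvanishing propagates through all $g-2$ steps of the induction: the self-intersection correction $\psi_{\bullet}+\psi_{\bullet'}$ could, in principle, kill the pairing at some stage. Addressing this requires either choosing Petersen's genus-$2$ witness with specific $\psi$-class factors already built in (so that the normal-bundle corrections combine constructively with $\alpha\cdot B$ rather than cancelling it), or varying $(\alpha_0, B_0)$ within a family of candidate witnesses on $\Mb_{2,2g+n-4}$. The vanishing half of the induction is essentially formal via the projection formula; the heart of the theorem lies in tracking the non-tautological character of the iterated bielliptic pushforwards and controlling these $\psi$-class corrections uniformly across the induction.
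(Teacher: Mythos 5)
Your reduction and the vanishing half of the argument are correct and match the paper: iterated self-gluing preserves $2g+n$, the projection formula shows $\xi_*\alpha$ annihilates complementary-degree tautological classes, and Petersen--Tommasi supply the genus-$2$ input. The gap is in the nonvanishing half, and it is fatal as stated, not merely a technical worry about $\psi$-classes ``possibly'' cancelling. Count dimensions: if $\alpha\in\RH^*(\Mb_{g-1,n+2})$ and $B\in\H^*(\Mb_{g-1,n+2})$ are in complementary (co)degree, so that $\alpha\cdot B$ is a number, then $\alpha\cdot B\cdot(\psi_\bullet+\psi_{\bullet'})$ lies one degree above the top and is identically zero, hence $\xi_*\alpha\cdot\xi_*B=0$ always. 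Equivalently, $\xi_*$ raises codimension by $1$ on each factor, so $\xi_*\alpha\cdot\xi_*B$ has total codimension $\dim\Mb_{g,n}+1$ and vanishes for trivial reasons. No choice of $\psi$-decorations on $(\alpha_0,B_0)$ can fix this, because Petersen's nonzero class $\alpha$ sits in $\RH^{24}(\Mb_{2,20})$ and the bielliptic cycle $[\Bb_{2\to 1,0,20}]$ sits in $\H^{22}(\Mb_{2,20})$, already exactly complementary; there is no room to lower degree.

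The paper's resolution is to \emph{not} push forward the genus-$2$ bielliptic cycle. Instead it pairs $\varphi_{12*}\alpha$ with the intrinsic bielliptic locus $[\Bb_{12\to 1,0,0}]\in\H^{22}(\Mb_{12})$, which has the correct complementary codimension $11$ on $\Mb_{12}$ (bielliptic loci have codimension $g-1=11$ for all $g$ in the relevant range, not codimension growing with the number of gluings). The nonvanishing then reduces, via the projection formula, to computing the pullback $\varphi_{12}^*[\Bb_{12\to 1,0,0}]$ on $\Mb_{2,20}$; the Schmitt--van Zelm excess-intersection calculus for admissible covers shows this equals $c\,[\Bb_{2\to 1,0,20}]+T$ with $c\neq 0$ and $T$ tautological, whence $\alpha\cdot\varphi_{12}^*[\Bb_{12\to 1,0,0}]=c\neq 0$. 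This pullback computation (equation \eqref{biellipticplustaut} and Lemma~\ref{tautterms}, relying crucially on Theorem~\ref{PetersenTommasi}(2)--(3) and Faber--Pandharipande's result that hyperelliptic loci are tautological) is the genuine technical core of the proof and is entirely absent from your proposal. You should also note that the paper first reduces to $2g+n=24$ via Lemmas~\ref{lem:increasemarkings} and~\ref{lem:increasegenus}, then goes straight to $\Mb_{12}$ rather than running a step-by-step induction, which sidesteps the need to control anything stage by stage.
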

\noindent In fact, it suffices to prove Theorem \ref{thm:main} for cohomology, as a direct calculation \cite[Corollary 2.5]{PetersenTommasi} shows that if $\RH^*(\Mb_{g,n})$ is not Gorenstein, then neither is $\R^*(\Mb_{g,n})$.

The central difficulty in Conjecture \ref{Pixton} is the study of classes in the Gorenstein kernel, classes that intersect trivially with every tautological class in complementary codimension. Conjecture \ref{Pixton} predicts the existence of nonzero classes in the Gorenstein kernel, and proving that such classes are nonzero thus requires the failure of the Gorenstein property. At the moment, there are no known cases where Conjecture \ref{Pixton} holds, but the Gorenstein property fails. The proof of Theorem \ref{thm:main} uses a new method for detecting nonzero classes in the Gorenstein kernel: intersecting with non-tautological classes. Recently, several ways of studying the cycle theory of $\Mb_{g,n}$ beyond the tautological ring have appeared \cite{SvZ, Lian, CLP-ste}. Lian and Schmitt--van Zelm propose adding Hurwitz cycles parametrizing covers of curves of positive genus to the calculus. Such cycles tend to be non-tautological \cite{GraberPandharipande,vanZelm,Liannontaut,AGNES}, whereas the Hurwitz cycles parametrizing covers of genus $0$ curves are tautological \cite{FaberPandharipande}.

\begin{rem}
    Conjecture \ref{Pixton} and the Gorenstein property have natural analogues for moduli spaces of curves of compact type \cite{Pixton, FPspeculation,Pan3questions}. In \cite{CLSGorenstein}, it is shown that if $g\geq 2$ and $2g+n\geq 12$, then neither $\R^*(\M_{g,n}^{\ct})$ nor $\RH^*(\M_{g,n}^{\ct})$ is Gorenstein. Moreover, the restriction of the $3$-spin relations to $\M_6^{\ct}$ are complete \cite{CLSGorenstein}, providing the first known case where Pixton's conjecture holds but the tautological ring is not Gorenstein. 
\end{rem}

The proof of Theorem \ref{thm:main} is carried out in Sections \ref{sec:reduction}, \ref{sec:admcycles}, and \ref{sec:nonzero}. In Section \ref{sec:reduction}, using direct calculations in the tautological ring (Lemmas \ref{lem:increasemarkings} and \ref{lem:increasegenus}), we reduce to the cases when $2g+n=24$. To prove these cases, we recall in Section \ref{sec:admcycles} the result of Petersen--Tommasi \cite{PetersenTommasi} and Petersen \cite{Petersen} when $g=2$: there exists a nonzero tautological class $\alpha\in \RH^{24}(\Mb_{2,20})$ such that $\alpha\cdot \beta=0$ for all $\beta\in \RH^{22}(\Mb_{2,20})$. Let $\xi:\Mb_{2,20}\rightarrow \Mb_{g,n}$ be an iterated self-gluing map, which exists for $(g,n)=(2+k,20-2k)$, $0\leq k\leq 10$, where $k$ is the number of gluings, i.e. $g\geq 2$ and $2g+n=24$. We show that $\xi_*\alpha$ pairs trivially with every tautological class in complementary codimension. It thus suffices to show that $\xi_*\alpha\neq 0$, which we do in Section \ref{sec:nonzero} by pulling back to $\Mb_{2,20}$ and intersecting with a non-tautological bielliptic cycle. The intersection computation is done using pullback formulas for Hurwitz cycles under $\xi^*$ developed by Schmitt--van Zelm \cite{SvZ}. 

A key feature of the proof is that it does not require any explicit knowledge of $\H^*(\Mb_{g,n})$ when $(g,n)\neq (2,20)$. Petersen and Tommasi's proof \cite{PetersenTommasi} that $\RH^*(\Mb_{2,20})$ is not Gorenstein requires essentially complete knowledge of $\H^*(\Mb_{2,20})$, which is obtained through an argument with the Leray spectral sequence and the study of the cohomology of local systems on $\mathcal{A}_2$, the moduli space of principally polarized abelian surfaces. It seems unlikely that this proof strategy will generalize to higher genus. In particular, when $g\geq 4$, $\M_g$ is not an open substack of $\mathcal{A}_g$, so drawing conclusions about the cohomology of $\M_g$ from the cohomology of $\mathcal{A}_g$ is much more difficult. We hope that the idea of reducing to a few pairs $(g,n)$ where both $g$ and $n$ are relatively small may also be useful for studying Conjecture \ref{Pixton}.

\begin{rem}
    The proof of Theorem \ref{thm:main} provides a geometric reason for the failure of the Gorenstein property: the existence of the non-tautological bielliptic cycle in $\H^{22}(\Mb_{2,20})$. The failure of the Gorenstein property for $\M_{g,n}^{\ct}$ does not require the existence of a non-tautological cycle \cite{CLSGorenstein}. Nevertheless, the failure of the Gorenstein property on $\M_g^{\ct}$ is closely related to the existence of a non-tautological class on $\mathcal{A}_g$, the moduli space of $g$-dimensional principally polarized abelian varieties \cite{COPabelian}.
\end{rem}

We conjecture the converse of Theorem \ref{thm:main} should also hold.
\begin{conj}\label{conj:alwaysgorensteinotherwise}
    The tautological rings $\R^*(\Mb_{g,n})$ and $\RH^*(\Mb_{g,n})$ are Gorenstein if and only if $g=0,1$ or $g\geq 2$ and $2g+n<24$.
\end{conj}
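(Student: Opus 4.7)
The ``only if'' direction is the contrapositive of Theorem \ref{thm:main}, so the task reduces to proving the ``if'' direction. The cases $g = 0, 1$ are handled by Keel and Petersen, so the remaining work concerns the finite list of pairs $(g,n)$ with $g \geq 2$ and $2g+n < 24$. My plan is a case-by-case verification: for each such $(g,n)$, use admcycles to enumerate a generating set for $\RH^*(\Mb_{g,n})$ modulo the $3$-spin relations, assemble the intersection pairing matrix in each pair of complementary codimensions, and check non-degeneracy. A by-product is that any successful verification automatically settles Pixton's Conjecture \ref{Pixton} for that $(g,n)$: if the matrix computed modulo the $3$-spin relations already has the rank matching the Gorenstein Betti numbers predicted by Pixton, then the $3$-spin relations must already be complete and the ring must be Gorenstein.

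For each fixed $(g,n)$ the size of the computation can be reduced substantially by exploiting the $S_n$-action on the markings and verifying non-degeneracy one isotypic component at a time, in the spirit of Petersen's analysis of $\Mb_{2,n}$. A useful consistency check is to combine admcycles with the Schmitt--van Zelm pullback formulas used in Section \ref{sec:nonzero}: one verifies that the non-tautological bielliptic test classes which forced the failure of the Gorenstein property in Theorem \ref{thm:main} in fact pair trivially with $\RH^*(\Mb_{g,n})$ for $(g,n)$ below the threshold, so that they do not obstruct the Gorenstein pairing within the conjectured range.

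The main obstacle I expect is computational. As $n$ grows, the number of decorated stable graphs and the dimensions of $\R^i(\Mb_{g,n})$ in middle codimension grow very rapidly, so for the largest remaining cases such as $(g,n)=(2,19)$ or $(g,n)=(3,17)$ a naive pairing-matrix computation is out of reach with present-day software. A structural shortcut would be welcome but seems genuinely hard: the self-gluing maps $\xi_*$ propagate \emph{failure} of the Gorenstein property upward in $(g,n)$, as exploited in Theorem \ref{thm:main}, but no analogous propagation is known for the Gorenstein property itself in either direction. Identifying such a propagation\,---\,perhaps by a careful analysis of how tautological classes decompose along $\xi$ and its compositions with forgetful maps, and of how the conjectural Gorenstein kernel behaves under these operations\,---\,is where I expect the essential new theoretical input to come from.
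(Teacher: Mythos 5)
This statement is a conjecture, not a theorem, and the paper does not claim to prove it. Your write-up is therefore not a proof but a research plan, and you should be upfront about that distinction. The ``only if'' direction is indeed Theorem \ref{thm:main}, and $g=0,1$ are settled by Keel and Petersen; but the brute-force plan you describe for the remaining $(g,n)$ is exactly what the paper already flags as out of reach in the remark following the conjecture (``Because of the number of decorated strata classes, this computation is not currently feasible''), so it cannot be counted as a contribution. Your optimism about $S_n$-isotypic decompositions and the admission that no structural propagation of the Gorenstein property is known are reasonable observations, but they do not close any gap.

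More importantly, you miss the approach the paper actually uses to make genuine progress on the ``if'' direction: Theorem \ref{thm:positivegorenstein}. Rather than compute pairing matrices, the paper shows for a large range of $(g,n)$ that the \emph{entire} even cohomology $\H^{2*}(\Mb_{g,n})$ is tautological, which forces $\RH^*(\Mb_{g,n})$ to be Gorenstein by Poincar\'e duality alone. This is proved by an induction over the boundary stratification using the exact sequence relating $\H^*(\Mb_{g,n})$, $\H^*(\tilde{\partial\M_{g,n}})$, and the weight-graded pieces of $\H^*(\M_{g,n})$, together with vanishing of odd cohomology of the boundary factors in the needed range (Table \ref{allcohtaut}). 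This non-computational argument settles $61$ of the $110$ predicted cases with $g\geq 2$, and the remaining cases are left open. Separately, a minor inaccuracy in your consistency-check paragraph: the non-tautological bielliptic classes $[\Bb_{g\to1,0,2n_2}]$ only arise in degree $22$ when $g+n_2=12$, i.e.\ exactly at the threshold $2g+n=24$; below the threshold they are not present to pair against, so the statement that they ``pair trivially'' with $\RH^*$ there is not the right framing.
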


\begin{rem}
    If Conjecture \ref{conj:alwaysgorensteinotherwise} is true, the remaining open cases can in principle be checked computationally, using the Sage package \texttt{admcycles} \cite{admcycles} to compute pairing matrices. Because of the number of decorated strata classes, this computation is not currently feasible.
\end{rem}
In cohomology, some cases of Conjecture \ref{conj:alwaysgorensteinotherwise} follow immediately from results in the literature. In particular, the $g=0,1,2$ cases are known \cite{Keel,Petersengenusone,PetersenTommasi,Petersen}. For the $g\geq 3$ cases, one can use that the cohomology rings are generated by tautological classes for many cases when $g$ and $n$ are small \cite[Theorem 1.4]{CL-CKgP}. It follows by Poincar\'e duality that the cohomological tautological ring is Gorenstein in these cases. Table \ref{ckgptable} summarizes what is already known from \cite{Keel,Petersengenusone,PetersenTommasi,Petersen,CL-CKgP}.
\begin{table}[h!]
\centering
\begin{tabular}{|c|c|c|c|c|c|c|c|c|c|} 
 \hline
 $g$ & $0$ & $1$ & $2$ & $3$ & $4$ & $5$ & $6$ & $7$ \\
 \hline
 $c(g)$ & $\infty$ & $\infty$ & $20$ & $9$ & $7$ & $5$ & $3$ & $1$  \\ 
\hline
$24-2g$ & $24$ & $22$ & $20$ & $18$ & $16$ & $14$ & $12$ & $10$ \\
\hline
\end{tabular}
\vspace{.1in}
\caption{$\RH^*(\Mb_{g,n})$ is Gorenstein for $n < c(g)$. When $g\geq 2$ and $n<24-2g$, Conjecture \ref{conj:alwaysgorensteinotherwise} predicts $\RH^*(\Mb_{g,n})$ is Gorenstein.}
\label{ckgptable}
\end{table}
\begin{rem}
    When $g=0,1$, $\R^*(\Mb_{g,n})=\RH^*(\Mb_{g,n})$. We do not know if this equality holds when $g=2$ and $n<c(g)$, but it is true for $g=2$ and $n<10$ \cite[Theorem 1.4]{CL-CKgP}. For $3\leq g \leq 7$ and $n<c(g)$, $\R^*(\Mb_{g,n})=\RH^*(\Mb_{g,n})$ \cite[Theorem 1.4]{CL-CKgP}. Thus, $\R^*(\Mb_{g,n})$ is Gorenstein in these cases.
\end{rem}
We can prove several more cases by studying just the even cohomology. Given $0\leq g \leq 8$, we define $d(g)$ as in Table \ref{ckgptable2}.
\begin{thm}\label{thm:positivegorenstein}
\begin{table}[h!]
\centering
\begin{tabular}{|c|c|c|c|c|c|c|c|c|c|c|} 
 \hline
 $g$ & $0$ & $1$ & $2$ & $3$ & $4$ & $5$ & $6$ & $7$ & $8$ \\
 \hline
 $d(g)$ & $\infty$ & $\infty$ & $20$ & $12$ & $10$ & $8$ & $6$ & $4$ & $1$  \\ 
\hline
$24-2g$ & $24$ & $22$ & $20$ & $18$ & $16$ & $14$ & $12$ & $10$ & $8$ \\
\hline
\end{tabular}
\vspace{.1in}
\caption{The even cohomology of $\Mb_{g,n}$ is tautological for $n< d(g)$. Hence, $\RH^*(\Mb_{g,n})$ is Gorenstein for $n <d(g)$. When $g\geq 2$ and $n<24-2g$, Conjecture \ref{conj:alwaysgorensteinotherwise} predicts $\RH^*(\Mb_{g,n})$ is Gorenstein.}
\label{ckgptable2}
\end{table}
For $g\leq 8$ and $n< d(g)$, the even cohomology of $\Mb_{g,n}$ is tautological. Therefore, $\RH^*(\Mb_{g,n})$ is Gorenstein for $g\leq 8$ and $n<d(g)$. 
\end{thm}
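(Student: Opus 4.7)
My plan is to adapt the boundary-induction framework of \cite{CL-CKgP} to the setting of even cohomology only, where the obstructions to tautologicality are weaker. The Gorenstein conclusion is automatic from tautologicality of the even cohomology: since $\Mb_{g,n}$ is smooth and proper, Poincar\'e duality gives a perfect pairing $\H^{2i}(\Mb_{g,n})\times \H^{6g-6+2n-2i}(\Mb_{g,n})\to \qq$, which restricts to the desired perfect pairing on $\RH^*$ provided the even cohomology is entirely tautological.

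The reduction step uses the stratification of $\Mb_{g,n}$ by topological type of the stable curve: each locally closed stratum $\M_\Gamma$ is a finite quotient of $\prod_{v\in V(\Gamma)}\M_{g_v,n_v}$, and its closure is swept out by a gluing map from $\prod_v \bar{\M}_{g_v,n_v}$. Purity of $\H^*(\Mb_{g,n})$ (from smoothness and properness) forces the Leray/Deligne spectral sequence for this stratification to degenerate, and every class on $\Mb_{g,n}$ is a sum of the restriction from $\M_{g,n}$ and pushforwards of $\Aut(\Gamma)$-invariant classes from boundary strata. Since pushforward under a proper map preserves parity of cohomological degree (the codimension shift is even) and decorated boundary classes are tautological, it suffices to show that $\H^{\text{even}}(\M_{g',n'})$ is tautological for every $(g',n')$ appearing in some stratum. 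I would then induct on $3g-3+n$: for strata in the boundary of $\Mb_{g,n}$, every factor $\M_{g_v,n_v}$ has strictly smaller dimension, K\"unneth splits cleanly in even degrees, and the finite quotient by $\Aut(\Gamma)$ preserves tautologicality.

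The main obstacle is the open-part input: proving that $\H^{\text{even}}(\M_{g,n})$ is tautological for each $g\leq 8$ and $n<d(g)$. This range is larger than the range $n<c(g)$ for the full $\H^*(\M_{g,n})$ in \cite{CL-CKgP} because the obstructions typically concentrate in odd degrees\,---\,cusp-form and Eisenstein classes for $\mathrm{Sp}_{2g}$ contribute to $\H^{\text{odd}}(\M_{g,n})$ via Eichler--Shimura--type identifications, not to the even part. I would extract the even-degree tautologicality case by case from the cohomological computations of Bergstr\"om, Faber, Petersen, Bergstr\"om--Tommasi, and Chan--Galatius--Payne (among others), separating even from odd contributions for each $(g,n)$ in the stated range. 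The most delicate verification is $g=8,n=0$, which reduces to tautologicality of $\H^{\text{even}}(\M_8)$ and sits at the frontier of current cohomological techniques.
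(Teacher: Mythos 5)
Your overall strategy matches the paper's: Poincar\'e duality converts tautologicality of even cohomology into the Gorenstein property, and the boundary stratification plus induction handles $\Mb_{g,n}$ given knowledge of $\M_{g',n'}$. The paper uses the exact sequence
\[
\H^{2k-2}(\tilde{\partial\M_{g,n}})\to \H^{2k}(\Mb_{g,n})\to \mathsf{W}_{2k}\H^{2k}(\M_{g,n})\to 0
\]
rather than a full stratification-spectral-sequence argument, but these are essentially the same tool.

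The genuine gap is your claim that ``K\"unneth splits cleanly in even degrees.'' It does not. For a product boundary stratum $\Mb_{g_1,n_1+1}\times \Mb_{g_2,n_2+1}$, the even cohomology is
\[
\H^{2k-2}\bigl(\Mb_{g_1,n_1+1}\times\Mb_{g_2,n_2+1}\bigr)\cong
\bigoplus_{p+q=2k-2}\H^p(\Mb_{g_1,n_1+1})\otimes\H^q(\Mb_{g_2,n_2+1}),
\]
and this includes summands with $p$ and $q$ both \emph{odd}. Your inductive hypothesis only controls even cohomology of the factors, so those odd\,$\otimes$\,odd terms are not covered, yet they push forward to even classes on $\Mb_{g,n}$. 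This is precisely the ``extra care'' the paper flags: it disposes of these terms by invoking vanishing of the full odd cohomology of $\Mb_{g',n'}$ in a suitable range ($n'<e(g')$ from \cite[Theorem 1.4]{CL-CKgP}, tabulated in the paper's Table~\ref{allcohtaut}), and the range $n<d(g)$ in the theorem is chosen exactly so that for every boundary splitting one of the two factors has vanishing odd cohomology. Without this step the induction does not close.

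A second, smaller issue: you treat the open-part input (tautologicality of $\mathsf{W}_{2k}\H^{2k}(\M_{g,n})$) as something to be extracted case by case from the cohomological literature, and single out $\M_8$ as frontier territory. In fact this is already available as a theorem: the paper cites \cite[Theorem 1.4]{CL-CKgP} together with \cite[Theorem 1.10 and Lemma 4.3]{CLP-ste} for the identification $\mathsf{W}_{2k}\H^{2k}(\M_{g,n})=\RH^{2k}(\M_{g,n})$ in the relevant range $3\le g\le 8$, $n<d(g)$. So the open-part input is not the bottleneck; the bottleneck is the odd\,$\otimes$\,odd K\"unneth contribution in the boundary, which your sketch overlooks.
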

\noindent We do not know the analogous result for $\R^*(\Mb_{g,n})$. In Section \ref{sec:posgorenstein}, we prove Theorem \ref{thm:positivegorenstein} using the stratification of $\Mb_{g,n}$ by topological type and previously known results about the cohomology (both even and odd) of moduli spaces of curves of low genus \cite{CL-CKgP, CLP11, CLP-ste}. When $g\geq 2$, Conjecture \ref{conj:alwaysgorensteinotherwise} predicts that $\RH^*(\Mb_{g,n})$ is Gorenstein for $110$ pairs $(g,n)$. Theorem \ref{thm:positivegorenstein} shows that $\RH^*(\Mb_{g,n})$ is Gorenstein for $61$ of those pairs.

When $\RH^*(\Mb_{g,n})$ is known to have the Gorenstein property, the even cohomology algebra $\H^{2*}(\Mb_{g,n})$ is tautologically generated. It seems likely that the Gorenstein property is simply an accident, occurring only for small $g$ and $n$ because of the non-existence of non-tautological classes. Together with Theorem \ref{thm:main}, Conjecture \ref{alltaut} below implies Conjecture \ref{conj:alwaysgorensteinotherwise} in cohomology by Poincar\'e duality.
\begin{conj}\label{alltaut}
    If $2g+n<24$, $\H^{2*}(\Mb_{g,n})$ is generated by tautological classes.
\end{conj}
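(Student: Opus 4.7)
The plan is to reduce Conjecture \ref{alltaut} to a statement about the even cohomology of the \emph{open} moduli spaces $\M_{g',n'}$ via the topological stratification of $\Mb_{g,n}$. Each stratum $\M_\Gamma$ indexed by a stable graph $\Gamma$ of type $(g,n)$ admits a finite surjection from $\prod_{v \in V(\Gamma)} \M_{g_v, n_v}$. A short count using $\sum_v g_v = g - h^1(\Gamma)$, $\sum_v n_v = n+2E$, and the stability bound $2g_w + n_w \geq 3$ at each vertex shows that every individual vertex satisfies $2g_v + n_v \leq 2g+n$, with equality only when $\Gamma$ has a single vertex (with loops). Under the hypothesis $2g+n<24$, every stratum therefore only involves factors $\M_{g_v,n_v}$ with $2g_v+n_v\leq 23$.

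The first step is formal: combining the spectral sequence (or iterated Gysin long exact sequences) associated to the filtration of $\Mb_{g,n}$ by boundary depth with the K\"unneth formula and invariance under $\Aut(\Gamma)$ reduces the problem to showing that for every $(g', n')$ with $2g'+n'\leq 23$, the even cohomology $\H^{2*}(\M_{g', n'})$ is generated by tautological classes. Pushforwards along stratum inclusions and gluing maps preserve tautological classes, and the Thom isomorphism on each stratum contributes an even shift, so even cohomology of $\Mb_{g,n}$ is built only from even cohomology of the open factors and odd cohomology plays no role.

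Once this reduction is in place, one would proceed by induction on $g'$, inputting the known tautological generation results: for $g' = 0, 1$ the statement is classical, and for small $(g', n')$ it is contained in the results of \cite{CL-CKgP, CLP11, CLP-ste} that already underlie Theorem \ref{thm:positivegorenstein}. For large $g'$, the bound $n' \leq 23 - 2g'$ severely restricts the number of markings, so these cases should be accessible by the same techniques. The delicate range is $g' = 2, 3, 4, 5$ with relatively many markings, which would have to be handled case by case.

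The main obstacle is precisely this last step: proving tautological generation of $\H^{2*}(\M_{g',n'})$ for the intermediate pairs, especially $\M_{2,n'}$ with $n'$ approaching $19$, where one is near the regime in which non-tautological bielliptic and Hurwitz classes \cite{GraberPandharipande, vanZelm, Liannontaut} are known to appear. Existing methods for bounding non-tautological cohomology proceed on a case-by-case basis and have no general mechanism for stopping precisely at the threshold $2g+n=24$. Conjecture \ref{alltaut} therefore encodes a nontrivial structural prediction that the stratification argument reduces to, but does not resolve; a full proof seems to require genuinely new vanishing or classification results for non-tautological cohomology below the critical threshold.
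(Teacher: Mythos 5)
The statement you set out to prove is labeled a \emph{conjecture} in the paper (Conjecture~\ref{alltaut}) and is not proven there; its only role is to show, together with Theorem~\ref{thm:main}, that Conjecture~\ref{conj:alwaysgorensteinotherwise} would follow in cohomology. You correctly treat it as such and acknowledge that your stratification argument reduces but does not resolve the question.

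There is, however, a concrete error in the reduction you do carry out. You assert that because the Gysin/Thom isomorphism shifts degree by an even amount, ``even cohomology of $\Mb_{g,n}$ is built only from even cohomology of the open factors and odd cohomology plays no role.'' This is false: in the K\"unneth decomposition of a product stratum, even total degree is also realized by tensoring an even number of odd-degree classes, e.g.\ $\H^{p}\otimes\H^{q}$ with $p,q$ both odd. Such summands contribute to the even cohomology of $\Mb_{g,n}$ and need not be tautological. This is precisely the subtlety the paper isolates and handles in the proof of the related Theorem~\ref{thm:positivegorenstein} (Section~\ref{sec:posgorenstein}): after the exact sequence \eqref{weightexactsequence}, it restricts attention to the odd--odd K\"unneth summands and shows they vanish in the required range using the vanishing of odd cohomology recorded in Table~\ref{allcohtaut}. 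Your outline, once corrected on this point, mirrors that proof --- stratify, apply K\"unneth, feed in known generation and vanishing results for small $(g,n)$ --- but the paper only goes as far as its inputs permit (the $d(g)$ bound) rather than claiming the full threshold $2g+n<24$, which remains the open content of the conjecture.
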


\subsection*{Acknowledgments}
I was inspired to work on this problem after a conversation with Dan Petersen. I am grateful to Emily Clader, Carel Faber, Hannah Larson, Dragos Oprea, Rahul Pandharipande, Dan Petersen, Aaron Pixton, and Johannes Schmitt for many related discussions. In particular, Johannes Schmitt suggested the proof of Lemma \ref{lem:increasegenus}. Thanks to Carel Faber, Hannah Larson, Rahul Pandharipande, and Johannes Schmitt for comments on an earlier draft.  This research was supported by a Hermann-Weyl-instructorship from the Forschungsinstitut für Mathematik at ETH Z\"urich.

\section{Reduction to the case $2g+n=24$}\label{sec:reduction}
We first reduce Theorem \ref{thm:main} to the cases when $g\geq 2$ and $2g+n=24$. 

\begin{lem}\label{socles}
The self-gluing and forgetful pushforward maps in top degree
\[
\RH^{6g-8+2n}(\Mb_{g-1,n+2})\rightarrow \RH^{6g-6+2n}(\Mb_{g,n}) \quad \text{and} \quad \RH^{6g-4+2n}(\Mb_{g,n+1})\rightarrow \RH^{6g-6+2n}(\Mb_{g,n})
\]
are isomorphisms.
\end{lem}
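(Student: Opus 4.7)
The plan is to show that all three rational vector spaces are one-dimensional over $\qq$, each generated by the class of a point, and then to verify that both pushforward maps send the generator to the generator.

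First, since $\dim_{\cc}\Mb_{g,n}=3g-3+n$, the degrees $6g-8+2n$, $6g-6+2n$, and $6g-4+2n$ are the top real cohomological degrees of $\Mb_{g-1,n+2}$, $\Mb_{g,n}$, and $\Mb_{g,n+1}$ respectively. Each $\Mb_{\bullet}$ is a smooth, proper, connected Deligne--Mumford stack, so its top rational cohomology is one-dimensional over $\qq$ and generated by the class of a closed point. That class is tautological: it is the fundamental class of a zero-dimensional boundary stratum, namely one associated to a maximally degenerate trivalent stable graph whose vertices are all of genus $0$ with exactly three incident half-edges or legs. Since such fundamental classes are (undecorated) strata classes by definition, all three $\RH^{\mathrm{top}}$'s in the statement coincide with the full top cohomology $\qq\cdot[\mathrm{pt}]$.

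Next, I would argue that both pushforward maps are nonzero. For any proper morphism $f\colon X\to Y$ of smooth, proper, connected Deligne--Mumford stacks and any closed point $p\in X$, pushforward in Borel--Moore homology satisfies $f_*[p]=[f(p)]$. Applying this to the proper maps $\xi\colon \Mb_{g-1,n+2}\to\Mb_{g,n}$ and $\pi\colon \Mb_{g,n+1}\to\Mb_{g,n}$, and translating via Poincar\'e duality to top cohomology, the Gysin pushforwards send the class of a point to the class of a point. Since both sides are one-dimensional, each map is an isomorphism.

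I do not anticipate a serious obstacle. The only subtlety worth flagging is that the self-gluing map carries a generic order-$2$ automorphism swapping the two preimages of the glued node, which alters rational factors in degree computations but does not affect the key fact that a single point pushes forward to a single point. Everything else is formal from the definitions of the tautological ring and from standard properties of pushforward on cohomology of smooth proper DM stacks.
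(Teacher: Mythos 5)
Your proposal is correct and takes essentially the same route as the paper: the paper cites Graber--Vakil for the fact that $\H^{\mathrm{top}}(\Mb_{g,n})=\RH^{\mathrm{top}}(\Mb_{g,n})\cong\qq$ (which you re-derive from Poincar\'e duality plus the observation that zero-dimensional strata classes span the top degree), and then both arguments conclude by noting that the pushforward of a point class is a nonzero multiple of a point class, so the map between one-dimensional spaces is an isomorphism. Your remark about the stacky automorphism factor is exactly the right caveat and does not affect the conclusion.
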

\begin{proof}
We have $\H^{6g-6+2n}(\Mb_{g,n})=\RH^{6g-6+2n}(\Mb_{g,n})\cong \qq$ for any $g$ and $n$ \cite{GraberVakil}. The result follows by Poincar\'e duality and the fact that zero cycles in homology push forward nontrivially for degree reasons.
\end{proof}

\begin{lem}\label{lem:increasemarkings}
    If $\RH^*(\Mb_{g,n})$ is not Gorenstein, then $\RH^*(\Mb_{g,n+1})$ is not Gorenstein.
\end{lem}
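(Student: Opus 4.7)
The plan is to pull back along the forgetful map $\pi \colon \Mb_{g,n+1} \to \Mb_{g,n}$. I would fix a nonzero class $\alpha \in \RH^*(\Mb_{g,n})$ in the Gorenstein kernel, so $\alpha \cdot \beta = 0$ for every tautological $\beta$ in complementary degree, and show that $\tilde\alpha := \pi^*\alpha$ witnesses the failure of the Gorenstein property on $\Mb_{g,n+1}$.

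For any tautological $\gamma \in \RH^*(\Mb_{g,n+1})$ in degree complementary to $\tilde\alpha$, the projection formula yields $\pi_*(\tilde\alpha \cdot \gamma) = \alpha \cdot \pi_*\gamma$. Since $\pi_*$ preserves the tautological subring and $\pi_*\gamma$ lies in degree complementary to $\alpha$, the hypothesis on $\alpha$ forces $\alpha \cdot \pi_*\gamma = 0$ in $\RH^{6g-6+2n}(\Mb_{g,n})$. Invoking Lemma \ref{socles}, the pushforward $\pi_* \colon \RH^{6g-4+2n}(\Mb_{g,n+1}) \to \RH^{6g-6+2n}(\Mb_{g,n})$ is an isomorphism, so the vanishing of $\pi_*(\tilde\alpha \cdot \gamma)$ upgrades to the vanishing of $\tilde\alpha \cdot \gamma$ itself in the top cohomology of $\Mb_{g,n+1}$.

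To see $\tilde\alpha \neq 0$, I would apply the projection formula once more, now against $\psi_{n+1}$: $\pi_*(\tilde\alpha \cdot \psi_{n+1}) = \alpha \cdot \pi_*\psi_{n+1} = (2g-2+n)\,\alpha$, which is nonzero since $\alpha \neq 0$ and $2g-2+n > 0$ for every $(g,n)$ with $\Mb_{g,n}$ nonempty. The argument is essentially formal, with Lemma \ref{socles} the sole nontrivial ingredient: it is precisely what converts the pushforward vanishing $\pi_*(\tilde\alpha \cdot \gamma)=0$ into the genuine vanishing of $\tilde\alpha \cdot \gamma$ in $\RH^{6g-4+2n}(\Mb_{g,n+1}) \cong \qq$. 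I do not anticipate any real obstacle beyond checking that this socle isomorphism is available.
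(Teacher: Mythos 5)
Your proof is correct and follows essentially the same strategy as the paper's: pull back the Gorenstein-kernel class along the forgetful map $\pi$, use the projection formula together with the socle isomorphism of Lemma \ref{socles} to show the pullback pairs trivially with every tautological class in complementary degree, and establish nonvanishing of $\pi^*\alpha$ via $\pi_*(\pi^*\alpha\cdot\psi_{n+1}) = (2g-2+n)\alpha$.
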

\begin{proof}
    Let $\pi:\Mb_{g,n+1}\rightarrow \Mb_{g,n}$ be the map forgetting the last marked point. Then 
    \[
    \pi^*:\RH^*(\Mb_{g,n})\rightarrow \RH^*(\Mb_{g,n+1})
    \]
    is injective because for any $\alpha \in \RH^*(\Mb_{g,n})$,
    \[
    \frac{1}{2g-2+n}\pi_*(\pi^*\alpha\cdot \psi_{n+1})=\alpha.
    \]
    Let $\beta \in \RH^*(\Mb_{g,n})$ be a nonzero class such that
    \[
    \beta\cdot \gamma =0
    \]
    for all $\gamma \in \RH^*(\Mb_{g,n})$ of complementary degree. Then $\pi^*\beta\neq 0$, and for all $\delta\in \RH^*(\Mb_{g,n+1})$ of complementary degree
    \[
    \pi_*(\pi^*\beta\cdot \delta)=\beta\cdot \pi_*\delta = 0.
    \]
    Because $\pi_*$ is injective in the top degree by Lemma \ref{socles}, $\RH^*(\Mb_{g,n+1})$ is not Gorenstein.

\end{proof}
\begin{lem}\label{lem:increasegenus}
    If $\RH^*(\Mb_{g})$ is not Gorenstein, then $\RH^*(\Mb_{g+1})$ is not Gorenstein.
\end{lem}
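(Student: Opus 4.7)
The plan is to transfer a Gorenstein kernel class from $\Mb_g$ up to $\Mb_{g+1}$ via the self-gluing map. Start with a nonzero class $\beta\in \RH^k(\Mb_g)$ satisfying $\beta\cdot\gamma=0$ for every $\gamma\in \RH^{3g-3-k}(\Mb_g)$. Writing $q=\pi_1\circ\pi_2:\Mb_{g,2}\to\Mb_{g,1}\to\Mb_g$ for the composition of the two forgetful maps, two applications of Lemma \ref{lem:increasemarkings} show that $\alpha:=q^*\beta\in \RH^k(\Mb_{g,2})$ is nonzero and lies in the Gorenstein kernel.

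Let $\xi:\Mb_{g,2}\to\Mb_{g+1}$ be the self-gluing map that identifies the two markings, and set $\tilde\alpha:=\xi_*\alpha\in \RH^{k+1}(\Mb_{g+1})$. The projection formula immediately implies that $\tilde\alpha$ lies in the Gorenstein kernel of $\RH^*(\Mb_{g+1})$: for any $\delta\in \RH^{3g-k-1}(\Mb_{g+1})$, $\tilde\alpha\cdot\delta=\xi_*(\alpha\cdot\xi^*\delta)$, and the factor $\alpha\cdot\xi^*\delta\in \RH^{3g-1}(\Mb_{g,2})$ vanishes because $\xi^*\delta$ is a tautological class in complementary codimension to $\alpha$.

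The main obstacle is to show $\tilde\alpha\neq 0$. Since $\alpha$ annihilates every tautological class in complementary codimension, one cannot detect non-vanishing via a tautological pairing, and I would instead compute $\xi^*\tilde\alpha$. The self-gluing map factors as an \'etale double cover $\rho:\Mb_{g,2}\to \Mb_{g,2}/\tau$ (where $\tau$ swaps the markings, acting freely on the ordered configuration space) followed by a closed immersion into $\Mb_{g+1}$, so a standard excess intersection computation yields $\xi^*\xi_*\alpha=(\alpha+\tau^*\alpha)\cdot(-\psi_1-\psi_2)$. Since $\alpha=q^*\beta$ is $\tau$-invariant, this simplifies to $-2q^*\beta(\psi_1+\psi_2)$.

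To conclude it suffices to check that $q^*\beta(\psi_1+\psi_2)\neq 0$ in $\RH^{k+1}(\Mb_{g,2})$. I would apply the pushforward $\pi_{2*}$ and use the standard formulas $\pi_{2*}\psi_1=1$ (coming from the $\psi$-comparison formula and the fact that the section corresponding to the boundary divisor $\delta_{0,\{1,2\}}$ composes with $\pi_2$ to the identity) together with $\pi_{2*}\psi_2=\kappa_0=2g-1$, obtaining $\pi_{2*}\big(q^*\beta(\psi_1+\psi_2)\big)=2g\cdot\pi_1^*\beta$, which is nonzero since $g\geq 1$ and $\pi_1^*\beta\neq 0$. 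Hence $\xi^*\tilde\alpha\neq 0$, and therefore $\tilde\alpha$ is the desired nonzero Gorenstein kernel class on $\Mb_{g+1}$.
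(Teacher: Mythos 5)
Your overall strategy mirrors the paper's: push a Gorenstein-kernel class to $\Mb_{g+1}$ along a boundary gluing map, check it stays in the kernel via the projection formula, and then detect nonvanishing by applying the self-intersection formula and a forgetful pushforward. The difference is the choice of gluing map: you use the irreducible self-gluing $\xi:\Mb_{g,2}\to\Mb_{g+1}$, whereas the paper uses the elliptic-tail map $\varphi:\Mb_{g,1}\times\Mb_{1,1}\to\Mb_{g+1}$ and then pushes forward along $\mathrm{pr}_1$, which is what makes the nonvanishing clean.

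The gap is in the claimed formula $\xi^*\xi_*\alpha=(\alpha+\tau^*\alpha)\cdot(-\psi_1-\psi_2)$. The map $\Mb_{g,2}/\tau\to\Mb_{g+1}$ is \emph{not} a closed immersion: its image is the boundary divisor $\Delta_{\mathrm{irr}}$, which is not normal (it self-crosses along the locus of curves with two or more non-separating nodes), and $\Mb_{g,2}/\tau$ is the normalization of $\Delta_{\mathrm{irr}}$, not a subvariety. Equivalently, the fiber product $\Mb_{g,2}\times_{\Mb_{g+1}}\Mb_{g,2}$ has components beyond the diagonal and the graph of $\tau$, coming from pairs of normalizations at \emph{different} non-separating nodes. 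These additional components contribute extra terms to $\xi^*\xi_*\alpha$ supported on $\partial\Mb_{g,2}$, and after applying $\pi_{2*}$ these do not obviously vanish or fail to cancel $-4g\,\pi_1^*\beta$ (note that $\beta$ could a priori be a boundary class, since all we know is that it lies in the Gorenstein kernel). By contrast, the paper's elliptic-tail map also produces a correction term of this type, namely $\alpha\cdot\delta_{1,\emptyset}\otimes 1$, but the product structure $\Mb_{g,1}\times\Mb_{1,1}$ lets one push forward along $\mathrm{pr}_1$, which kills any class of the form $\eta\otimes 1$ for dimension reasons and leaves only $-\tfrac{1}{24}\alpha$ from the $\alpha\otimes(-\psi_1)$ term. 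Your pushforward $\pi_{2*}$ has no analogous mechanism to isolate the main term, so the nonvanishing of $\xi_*\alpha$ is not established without computing the extra components explicitly.
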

\begin{proof}
Let $\beta\in \RH^k(\Mb_{g})$ be a nonzero class such that
    \[
    \beta\cdot \gamma = 0
    \] 
    for all $\gamma\in \RH^{6g-6-k}(\Mb_{g})$.
    Set $\alpha=\pi^*\beta$, where $\pi:\Mb_{g,1}\rightarrow \Mb_{g}$ is the forgetful map. Let
    \[
    \varphi:\Mb_{g,1}\times \Mb_{1,1}\rightarrow \Mb_{g+1}
    \]
    be the map attaching an elliptic tail.
    For $\zeta\in \R\H^{6g-2-k}(\Mb_{g+1})$, we have
    \[
    \varphi_*(\alpha\otimes 1)\cdot \zeta=\varphi_*((\alpha\otimes1)\cdot\varphi^*\zeta)\in \RH^{6g}(\Mb_{g+1})\cong \qq.
    \]
    Consider the class $(\alpha\otimes 1)\cdot \varphi^*\zeta$.
    Because $\zeta$ is tautological,
    \[
    \varphi^*\zeta\in (\RH^{6g-2-k}(\Mb_{g,1})\otimes \RH^{0}(\Mb_{1,1}))\oplus (\RH^{6g-4-k}(\Mb_{g,1})\otimes \RH^{2}(\Mb_{1,1}))
    \]
    under the K\"unneth decomposition \cite[Proposition 1]{GraberPandharipande}. The product of $\alpha\otimes 1$ with any class in the first summand is trivial for dimension reasons. The product of $\alpha\otimes 1$ with the second summand is zero by the argument in the proof of Lemma \ref{lem:increasemarkings}.
    
    It suffices to show that $\varphi_*(\alpha\otimes 1)\neq 0$. By the self-intersection formula \cite[Equation 11]{GraberPandharipande},
    \[
    \varphi^*\varphi_*(\alpha\otimes 1)=\alpha\cdot(\delta_{1,\emptyset}-\psi_1)\otimes 1 +\alpha\otimes (-\psi_1).
    \]
    Here, $\delta_{1,\emptyset}$ is the boundary divisor on $\Mb_{g,1}$ given by the gluing map $\Mb_{1,1}\times \Mb_{g-1,2}\rightarrow \Mb_{g,1}$.
We see that
\[
\mathrm{pr}_{1*}(\varphi^*\varphi_*(\alpha\otimes 1))=-\frac{1}{24}\alpha\neq 0,
\]
where $\mathrm{pr}_1:\Mb_{g,1}\times \Mb_{1,1}\rightarrow \Mb_{g,1}$ is projection onto the first factor.
Thus, $\varphi_*(\alpha\otimes 1)\neq 0$.
\end{proof}
Combining Lemmas \ref{lem:increasemarkings} and \ref{lem:increasegenus}, we see it suffices to prove Theorem \ref{thm:main} in the cases when $g\geq 2$ and $2g+n=24$.
\section{Admissible covers and cohomology in genus $2$}\label{sec:admcycles}
We collect background information for the proof of Theorem \ref{thm:main} on moduli spaces of admissible double covers and the cohomology of $\Mb_{2,n}$. 
\subsection{Admissible covers and non-tautological cycles}
We consider the stacks $\Hb_{g,G,\xi}$ of admissible $G$-covers of genus $g$ with monodromy data $\xi$, where $G$ is a finite group, as in \cite[Definitions 3.2 and 3.5]{SvZ}. See \cite{SvZ} for complete details. In this paper, we always take $G=\zz/2\zz$. In this special case, the monodromy data is simply a tuple $\xi=(a_1,\dots,a_b)$ of generators for the stabilizer group of points in the fibers of the source curve over $b$ ordered points of the target curve, controlling the ramification behavior. The group law will be written additively, and so $\xi$ has entries in $\{0,1\}$. An entry $a_i=1$ corresponds to marking a ramification point of the source curve over the $i$th marked point of the target curve. An entry $a_j=0$ corresponds to marking two points of the source pairwise switched by the covering involution over the $j$th marked point of the target.

As a rule, we will mark every point of ramification when considering moduli of admissible double covers. That is, if we are given the genus $h$ of the target curve, the monodromy data $\xi$ will contain $2g-4h+2$ entries $1$, the maximal number allowed by the Riemann--Hurwitz formula. Conversely, $\xi$ and $g$ determine $h$.

The source map 
\[\phi:\Hb_{g,G,\xi}\rightarrow \Mb_{g,2g-4h+2+2n_2}\]
sends an admissible cover to its source curve together with all of the marked points determined by the monodromy data $\xi$ \cite[Theorem 3.7]{SvZ}. When $\xi=(1^{2g-4h+2},0^{n_2})$, we denote the image of the source map by $\Bb_{g\rightarrow h,2g-4h+2,2n_2}$. One can further compose with the forgetful map $\Mb_{g,2g-4h+2+2n_2}\rightarrow \Mb_{g,n_1+2n_2}$, forgetting the first $2g-4h+2-n_1$ points. The image of $\Bb_{g\rightarrow h,2g-4h+2,2n_2}$ under this map is denoted $\Bb_{g\rightarrow h,n_1,2n_2}$. When $h=1$, these loci play a fundamental role because of the following result of Graber--Pandharipande \cite{GraberPandharipande} (when $g=2$ and $n_2=10$) and van Zelm \cite{vanZelm}.
\begin{thm}[Graber--Pandharipande, van Zelm]\label{biellipticnontaut}
    The class $[\Bb_{g\rightarrow 1,0,2n_2}]\in \H^{22}(\Mb_{g,2n_2})$ is non-tautological if $g\geq 2$ and $g+n_2=12$.
\end{thm}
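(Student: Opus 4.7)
My plan is to prove the base case $(g,n_2)=(2,10)$ via a Hodge-theoretic argument using non-tautological cohomology on $\Mb_{1,m}$ coming from cusp forms, and then reduce the remaining cases to the base case via pullback along iterated self-gluing maps.

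For the base case, I would exploit the source/target correspondence of admissible $\zz/2$-covers to elliptic curves,
\[
\Mb_{2, 22} \xleftarrow{\phi} \Hb_{2, \zz/2\zz, (1^2, 0^{10})} \xrightarrow{\beta} \Mb_{1, 12},
\]
together with the forgetful map $\pi: \Mb_{2, 22} \to \Mb_{2, 20}$ which drops the two ramification markings. The composition $\beta_* \circ \phi^* \circ \pi^*$ sends tautological classes to tautological classes on $\Mb_{1, 12}$, by the standard compatibility of decorated strata classes with source and target maps of Hurwitz stacks. On the other hand, by Eichler--Shimura--Deligne, $\Mb_{1, 11}$ carries a nontrivial class in $H^{11, 0}$ coming from the weight-$12$ cusp form $\Delta$, which pulls back to $\Mb_{1, 12}$ and provides the cohomological witness: any functional involving this Hodge class vanishes on the image of $\RH^{22}(\Mb_{2,20})$ but (one hopes) not on the image of $[\Bb_{2 \to 1, 0, 20}]$. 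Verifying this would reduce to a direct period computation on the bielliptic pencil $\Bb_{2 \to 1, 0, 20} \to \Mb_{1, 1}$, using the Leray spectral sequence together with the local system cohomology attached to the elliptic fiber.

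For the remaining cases $g + n_2 = 12$ with $g \geq 3$, consider the iterated self-gluing map $\xi: \Mb_{2, 20} \to \Mb_{g, 2n_2}$ obtained by performing $g - 2$ self-gluings. Since $\xi^*$ preserves tautologicality in cohomology, it suffices to show $\xi^*[\Bb_{g \to 1, 0, 2n_2}]$ is non-tautological in $H^{22}(\Mb_{2, 20})$. Using the admissible-cover pullback formulas of Schmitt--van Zelm, $\xi^*[\Bb_{g \to 1, 0, 2n_2}]$ decomposes as $[\Bb_{2 \to 1, 0, 20}]$ (with nonzero coefficient, contributed by bielliptic curves of genus $g$ that degenerate to genus-$2$ bielliptic curves with $g-2$ self-nodes compatible with the involution) plus tautological correction terms and lower-dimensional admissible-cover cycles. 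Non-tautologicality of the leading term (from the base case) then propagates, after handling the lower-dimensional contributions inductively or by dimension count.

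The main obstacle is the explicit period computation in the base case: setting up the functional so that it vanishes on tautological classes is a formal consequence of the push/pull behavior of the correspondence and the Hodge type of $\omega_\Delta$, but showing that it does not also vanish on $[\Bb_{2 \to 1, 0, 20}]$ requires a concrete analysis of the Leray spectral sequence for the bielliptic pencil and a nontrivial check that the $\Delta$-period over the fibers is genuinely nonzero. This Hodge-theoretic computation, rather than the reduction to the base case, is the technical heart of the argument.
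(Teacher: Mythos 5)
The statement you are asked to prove is cited in the paper from Graber--Pandharipande (for $g=2$, $n_2=10$) and van Zelm (for $g\geq 3$); the paper does not reprove it, though Remark \ref{newproofvanzelm} observes that the $g>2$ cases can also be deduced from equation \eqref{biellipticplustaut} and the fact that pullback by gluing maps preserves tautologicality. Your reduction of the $g\geq 3$ cases to the $g=2$ case via self-gluing and the Schmitt--van Zelm pullback machinery is exactly this observation and is sound in outline. However, you hand-wave the crucial point that the ``tautological correction terms and lower-dimensional admissible-cover cycles'' appearing in $\xi^*[\Bb_{g\to 1,0,2n_2}]$ really are tautological: this is not formal, it is the substantive content of Lemma \ref{tautterms}, and it relies on Theorem \ref{PetersenTommasi}(2)--(3) (Petersen's structure theorem for $\H^*(\Mb_{2,20})$) to dispose of excess contributions and boundary-supported terms. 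Without citing or reproving that input your induction does not close.

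For the base case your construction diverges from the Graber--Pandharipande argument in a way that introduces a genuine gap. Graber--Pandharipande detect non-tautologicality by pulling back along a boundary gluing map
$j:\Mb_{1,11}\times\Mb_{1,11}\to\Mb_{2,20}$
and using the fact (their Proposition~1) that tautological classes pull back with K\"unneth components only in even$\,\otimes\,$even bidegree, whereas $j^*[\Bb_{2\to 1,0,20}]$ has a nonzero $H^{11}\otimes H^{11}$ K\"unneth component coming from the motive of the weight-$12$ cusp form $\Delta$. Your proposed operator $\beta_*\circ\phi^*\circ\pi^*$ through the Hurwitz stack $\Hb_{2,\zz/2\zz,(1^2,0^{10})}$ is a different correspondence, and the assertion that it sends tautological classes to tautological classes ``by the standard compatibility of decorated strata classes with source and target maps of Hurwitz stacks'' is not a standard result --- indeed Hurwitz correspondences with positive-genus target are precisely the mechanism producing non-tautological classes (the pushforward $\phi_*[1]=[\Bb_{2\to 1,0,20}]$ is the theorem you are trying to prove), so any claimed push-pull compatibility here needs a careful argument. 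There is also a degree mismatch in the way you intend to pair with the $(11,0)$ Hodge class $\omega_\Delta\in H^{11}$: your correspondence lands in $H^{22}(\Mb_{1,12})$, which does not pair with $H^{11}$. Finally you explicitly leave the key non-vanishing period computation open. I would recommend replacing the base case with the actual Graber--Pandharipande gluing-map argument, or simply citing it, and spelling out the role of Petersen's theorem in the reduction step.
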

\begin{rem}\label{newproofvanzelm} The proof of Theorem \ref{thm:main} only uses Theorem \ref{biellipticnontaut} when $g=2$. In fact, the $g>2$ cases of Theorem \ref{biellipticnontaut} will follow from the proof of Theorem \ref{thm:main}, using equation \eqref{biellipticplustaut} below and the fact that tautological classes pull back to tautological classes. The argument is not the same as van Zelm's proof of the $g\geq 3$ part of Theorem \ref{biellipticnontaut}, although it is quite similar.
\end{rem}

On the other hand, classes of hyperelliptic loci are tautological \cite[Proposition 1]{FaberPandharipande}.
\begin{thm}[Faber--Pandharipande]\label{hyperelliptictaut}
The class $[\Bb_{g\rightarrow 0, n_1,2n_2}]$ is tautological for any $g,n_1,n_2$.
\end{thm}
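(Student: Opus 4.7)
The plan is to realize the hyperelliptic locus as the image under the source map of a moduli space of admissible double covers of genus-$0$ curves, and to exploit Keel's theorem that the rational cohomology of $\Mb_{0,m}$ is generated by boundary classes.

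Set $\Hb := \Hb_{g,\zz/2\zz,(1^{2g+2},0^{n_2})}$, the moduli of admissible double covers of rational curves with all $2g+2$ Weierstrass points marked and $n_2$ pairs of non-ramification points marked. The target map $\tau : \Hb \to \Mb_{0,2g+2+n_2}$, sending a cover to its stabilized rational target decorated with marked branch points and marked pair-images, realizes $\Hb$ as a finite cover (a $\zz/2\zz$-gerbe on coarse spaces, coming from the hyperelliptic involution) of a moduli of rational curves. In particular, by Keel's theorem, $\H^*(\Hb;\qq)$ is generated by boundary classes pulled back via $\tau$. Now consider the source map $\phi:\Hb\to\Mb_{g,2g+2+2n_2}$, which is representable and proper. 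Stratify $\Hb$ by dual graphs of admissible covers: each stratum is itself (a product of) lower-complexity hyperelliptic admissible-cover moduli, and $\phi$ restricted to such a stratum factors through a boundary gluing morphism $\xi_{\Gamma}:\prod_v \Mb_{g_v,n_v}\to\Mb_{g,2g+2+2n_2}$. Arguing by induction on the number of nodes of the target (with base case the open stratum, where the target is $\p^1$ and the smooth hyperelliptic class is given by Mumford's formula in $\kappa$ and $\lambda$ classes), the pushforward of the fundamental class on each stratum is the image under $\xi_{\Gamma}$ of a product of tautological classes, hence tautological. Summing the contributions recovers $\phi_*[\Hb]$, which is a nonzero rational multiple of $[\Bb_{g\to 0,2g+2,2n_2}]$.

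The general case $\Bb_{g\to 0,n_1,2n_2}$ follows by composing with the forgetful map $\Mb_{g,2g+2+2n_2}\to\Mb_{g,n_1+2n_2}$, whose pushforward preserves the tautological ring. The main obstacle is the inductive step: one must enumerate admissible-cover dual graphs $\Gamma$, compute the multiplicities arising from the $\zz/2\zz$ hyperelliptic involution and from the $S_{2g+2}\times S_{n_2}$ labelings of Weierstrass and pair points, and track how target boundary classes on each rational target component lift along $\tau$ and transport through $\phi$ into decorated boundary strata. The hypothesis $h=0$ is essential: Keel's generation theorem has no analog for $\Mb_{h,m}$ with $h\geq 1$, so the target-side input fails, and indeed the conclusion itself fails for $h=1$ by Theorem \ref{biellipticnontaut}.
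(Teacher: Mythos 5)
The paper does not prove this statement; it is quoted directly from Faber--Pandharipande~\cite{FaberPandharipande}, where the proof goes through relative Gromov--Witten theory of $\p^1$: the virtual class of the space of relative stable maps is analyzed via virtual localization and degeneration formulas, and admissible covers are realized as a special case. Your proposal attempts a quite different route, but it has gaps serious enough that, as written, it is not a proof.

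The central problem is the sentence ``Summing the contributions recovers $\phi_*[\Hb]$.'' The stratification of $\Hb$ by target dual graph is a locally closed stratification; the boundary strata have strictly smaller dimension than $\Hb$, so their fundamental classes live in different degrees and do \emph{not} sum to $[\Hb]$. The fundamental class of $\Hb$ is just the fundamental class of the closure of the open stratum. Consequently, your induction has nothing to induct on: the boundary strata contribute nothing to $[\Hb]$, and the whole burden falls on the ``base case.'' That base case is the claim that the class of the open hyperelliptic locus in $\M_{g,n}$ is tautological, which you attribute to ``Mumford's formula in $\kappa$ and $\lambda$ classes.'' Mumford computed the hyperelliptic \emph{divisor} in $\bar\M_2$ and $\bar\M_3$; for $g\ge 4$ the hyperelliptic locus has codimension $g-2\ge 2$ in $\M_g$, and the statement that its class lies in $\R^*(\M_g)$ (let alone in $\R^*(\Mb_{g,n})$ after taking closure) is precisely the nontrivial content being asked for. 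There is no elementary formula to cite; establishing this is, in effect, the substance of the Faber--Pandharipande theorem itself.

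Two secondary issues. First, Keel's theorem (that $\H^*(\Mb_{0,m})$ is generated by boundary classes) is stated but never used: you never convert a cohomological statement about the target into a statement about $\phi_*[\Hb]$, and indeed the fundamental class of $\Hb$ lies in degree $0$ regardless of any description of higher cohomology of $\Mb_{0,m}$. Second, the closing explanation for why the argument fails for $h=1$ is not quite right: the even cohomology of $\Mb_{1,m}$ is also entirely tautological (Petersen), so ``Keel's theorem has no analog'' is not the actual obstruction; the failure for $h=1$ comes from Theorem~\ref{biellipticnontaut} directly and is a genuinely geometric phenomenon. A correct proof along degeneration lines does exist in the literature, but it requires the machinery of relative stable maps and virtual localization; the admissible-covers stratification alone does not close the loop.
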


\subsection{Cohomology in genus $2$}
The cohomology of $\Mb_{2,n}$ for $n\leq 20$ is completely understood by Petersen--Tommasi~\cite[Theorems A and B]{PetersenTommasi} and Petersen~\cite[Theorem 3.8]{Petersen}.

\begin{thm}[Petersen--Tommasi, Petersen]\label{PetersenTommasi}
Let $k\in \mathbb{N}$ be even.
    \begin{enumerate}
        \item For $n<20$, $\RH^{k}(\Mb_{2,n})=\H^{k}(\Mb_{2,n})$.
        \item For $k\neq 22$, $\RH^{k}(\Mb_{2,20})=\H^{k}(\Mb_{2,20})$.
        \item The image of the pushforward map $\H^{k-2}(\tilde{\partial\M_{2,20}})\rightarrow \H^{k}(\Mb_{2,20})$ lies in $\RH^k(\Mb_{2,20})$, where $\tilde{\partial\M_{2,20}}$ is the normalization of the boundary.
    \end{enumerate}
\end{thm}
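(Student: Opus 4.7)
The plan combines two main ingredients: the stratification of $\Mb_{2,n}$ by topological type, which expresses the cohomology in terms of products of smaller moduli spaces; and the Torelli morphism $\M_2 \hookrightarrow \mathcal{A}_2$, an open immersion that allows one to study $\H^*(\M_{2,n})$ via cohomology of symplectic local systems on the moduli space of principally polarized abelian surfaces.

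For parts (1) and (2), first use the long exact sequences of the boundary stratification to reduce the problem to the open stratum. Each boundary piece is (a quotient of) a product $\prod_v \Mb_{g_v, n_v}$ with either $g_v < 2$ or $g_v = 2$ and $n_v < n$; the former is understood by Keel and Petersen, while the latter is handled inductively on $n$. For the open stratum $\M_{2,n}$, combine the forgetful map $\M_{2,n} \to \M_2$ with the Torelli embedding, producing a Leray spectral sequence whose terms are cohomology groups $\H^*(\mathcal{A}_2, V_\lambda)$ for Schur-functor local systems $V_\lambda$ built from the universal $\H^1$. These were computed by Petersen using Eisenstein cohomology and the Faltings--Chai trace formula for Siegel modular forms. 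A degree-by-degree inspection of the resulting tables should show that for $n < 20$ every even-degree contribution is tautological, while for $n = 20$ exactly one non-tautological even class appears, in degree $22$.

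For part (3), observe that the normalization $\tilde{\partial \M_{2,20}}$ is a disjoint union of products $\prod_v \Mb_{g_v, n_v}$ with $g_v \leq 2$ and $2g_v + n_v < 24$. By part (1) applied inductively, together with the $g \leq 1$ results, the even cohomology of each factor is tautological, hence so is the even cohomology of $\tilde{\partial \M_{2,20}}$; pushforwards of tautological classes along boundary gluing maps are tautological by definition. The main obstacle throughout is the explicit computation of $\H^*(\mathcal{A}_2, V_\lambda)$ together with its decomposition into tautological and non-tautological parts, and the accompanying control of the differentials in the Leray spectral sequence: the threshold $n = 20$ is sharp precisely because this is where the cuspidal cohomology of $\mathcal{A}_2$ with an appropriate local system coefficient first produces a non-tautological class landing in even total degree on $\Mb_{2,n}$.
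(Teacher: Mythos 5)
This statement is a cited result in the paper, attributed to Petersen--Tommasi (\cite{PetersenTommasi}, Theorems A and B) and Petersen (\cite{Petersen}, Theorem 3.8); the paper gives no proof, so there is no in-paper argument to compare against. Evaluating your proposal on its own terms: for parts (1) and (2), the outline you give --- stratify by topological type, reduce to the open stratum, study $\H^*(\M_{2,n})$ via the forgetful map to $\M_2$ and the Torelli open immersion $\M_2\hookrightarrow\mathcal{A}_2$, and then run a Leray spectral sequence with coefficients in local systems whose cohomology on $\mathcal{A}_2$ Petersen computed --- is indeed the strategy used in the cited works, so this is a sound account of what is involved, even if necessarily schematic.

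Your argument for part (3), however, has a genuine gap. You claim that since the even cohomology of each boundary factor $\Mb_{g_v,n_v}$ is tautological, the even cohomology of $\tilde{\partial\M_{2,20}}$ is tautological, so the pushforward lands in $\RH^*(\Mb_{2,20})$. But the boundary pieces are \emph{products}, and by K\"unneth the even cohomology $\H^{k-2}$ of $\Mb_{g_1,n_1}\times\Mb_{g_2,n_2}$ contains summands $\H^p\otimes\H^q$ with both $p$ and $q$ odd. Such summands are certainly present: $\Mb_{1,m}$ has nonvanishing odd cohomology once $m\geq 11$, and $\Mb_{2,m}$ once $m\geq 10$. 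These odd-odd contributions are not covered by ``even cohomology of each factor is tautological,'' so the conclusion does not follow as stated. (This is exactly the subtlety the paper is careful about in the proof of Theorem \ref{thm:positivegorenstein} in Section \ref{sec:posgorenstein}.) To repair the argument one can first observe that part (2) makes part (3) vacuous except for $k=22$, and then check that there is no odd-odd K\"unneth term in $\H^{20}$ of the boundary: the pieces $\Mb_{2,m}\times\Mb_{0,m'}$ contribute no odd cohomology on the genus $0$ factor, and for $\Mb_{1,m}\times\Mb_{1,m'}$ the odd cohomology of $\Mb_{1,m}$ vanishes in degrees below $11$, so any odd-odd term has total degree at least $22>20$. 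With that additional step your part (3) goes through; without it there is a gap.
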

\noindent Theorem \ref{PetersenTommasi}(2) combined with Theorem \ref{biellipticnontaut} implies that $\RH^*(\Mb_{2,20})$ is not Gorenstein.

In contrast to the essentially complete result of Theorem \ref{PetersenTommasi}, little is known about the structure of the cohomology of $\Mb_{g,2n_2}$ beyond Theorem \ref{biellipticnontaut} when $g\geq 3$ and $g+n_2=12$. The proof of Theorem \ref{thm:main} relies on pulling back to $\Mb_{2,20}$ and leveraging the complete description of its cohomology, as follows.

By Poincar\'e duality and Theorem \ref{PetersenTommasi}(2), the intersection pairing
\[
\H^{22}(\Mb_{2,20})\times \RH^{24}(\Mb_{2,20})\rightarrow \qq
\]
is perfect. Hence, we can identify $\RH^{24}(\Mb_{2,20})$ with the dual of $\H^{22}(\Mb_{2,20})$. Pick $\alpha\in \RH^{24}(\Mb_{2,20})$ such that $\alpha \cdot [\Bb_{2\rightarrow 1,0,20}]=1$, but $\alpha\cdot \beta=0$ for every tautological class $\beta\in \RH^{22}(\Mb_{2,20})$.\footnote{The class $\alpha$ can be obtained by completing $[\Bb_{2\rightarrow 1,0,20}]$ together with a basis for the tautological subspace $\RH^{22}(\Mb_{2,20})$ to a basis for $\H^{22}(\Mb_{2,20})$ and choosing $\alpha$ to be the first basis vector in the dual basis.} 

Consider the gluing map
\[
\varphi_g: \Mb_{2,20}\rightarrow \Mb_{g,2n_2},
\]
given by identifying the last $g-2$ pairs of marked points on $\Mb_{2,20}$. Note that $g+n_2=12$.  Let $\gamma\in \RH^{24}(\Mb_{g,2n_2})$. By the projection formula,
\[
\varphi_{g*}\alpha\cdot \gamma = \varphi_{g*}(\alpha\cdot \varphi_{g}^*\gamma)=0 
\]
because $\varphi_g^*\gamma$ is tautological. By iteratively applying Lemma \ref{socles}, $\varphi_{g*}$ is an isomorphism in the top degree. Therefore, if $\varphi_{g*}\alpha\neq 0$, $\RH^*(\Mb_{g,2n_2})$ is not Gorenstein. In fact, it suffices to show that $\varphi_{12*}\alpha\neq 0$, as $\varphi_{g}$ factors $\varphi_{12}$. We will show that $\varphi_{12*}\alpha\neq 0$ by computing the intersection of $\varphi_{12*}\alpha$ with the bielliptic locus in genus $12$. By the projection formula,
\[
\varphi_{12*}\alpha\cdot[\Bb_{12\rightarrow 1,0,0}]=\varphi_{12*}(\alpha\cdot\varphi_{12}^*[\Bb_{12\rightarrow 1,0,0}]).
\]
By Lemma \ref{socles} applied iteratively, it suffices to show that $\alpha\cdot\varphi_{12}^*[\Bb_{12\rightarrow 1,0,0}]\neq 0$. We claim that
\begin{equation}\label{biellipticplustaut}
    \varphi_{12}^*[\Bb_{12\rightarrow 1,0,0}]=c[\Bb_{2\rightarrow 1,0,20}]+T,
\end{equation}
where $c\neq 0$ and $T\in \RH^{22}(\Mb_{2,20})$. Assuming \eqref{biellipticplustaut}, we have
\[
\alpha\cdot\varphi_{12}^*[\Bb_{12\rightarrow 1,0,0}]=\alpha\cdot c[\Bb_{2\rightarrow 1,0,20}] + \alpha\cdot T = c \neq 0.
\]
To finish the proof of Theorem \ref{thm:main}, we thus only need to show that equation \eqref{biellipticplustaut} holds.

\begin{rem}
    The non-tautological part of $\H^{22}(\Mb_{2,20})$ spans a copy of the irreducible $\mathbb{S}_{20}$ representation associated to the partition $(2^{10})$ \cite[Remark 3.10]{Petersen}, which makes equation \eqref{biellipticplustaut} surprisingly simple.
\end{rem}

\section{Pulling back $\Bb_{12\rightarrow 1,0,0}$: proof of equation \eqref{biellipticplustaut}}\label{sec:nonzero}
Schmitt and van Zelm \cite{SvZ} developed a method for pulling back admissible covers cycles to boundary strata $\Mb_{\Gamma}$ of $\Mb_{g,n}$ associated to a stable graph $\Gamma$. We apply their method to prove equation \eqref{biellipticplustaut}.

Let $A$ be the stable graph with one vertex of genus $2$ and $10$ loop edges, with the half-edges labeled by $\{1,\dots,20\}$ and edges given by $(1,2),(3,4),\dots,(19,20)$.  Then $\Mb_{A}=\Mb_{2,20}$, and the gluing morphism
\[
\varphi_{A}:\Mb_{A}\rightarrow \Mb_{12}
\]
is the map $\varphi_{12}$, defined in the previous section.
 Let $B$ denote the graph obtained from $A$ by adding $22$ legs, corresponding to the ramification points of a double cover of a genus $1$ curve by a genus $12$ curve. 
The associated gluing map is denoted by 
 \[
 \varphi_{B}:\Mb_B\rightarrow \Mb_{12,22}.
 \]
 Set $\xi=(1^{22})$. We have the composition
 \[
 \Hb_{12,G,\xi}\xrightarrow{\phi} \Mb_{12,22}\xrightarrow{\pi} \Mb_{12},
 \]
 where the first map is the source curve map and the second map forgets the markings. The image in $\Mb_{12}$ is by definition $\Bb_{12\rightarrow 1,0,0}$. We want to compute the pullback
 \[
 \varphi_{A}^*[\Bb_{12\rightarrow 1, 0 ,0}]=\varphi_A^*\pi_*\phi_*[\Hb_{12,G,\xi}].
 \] 
 \subsection{The fiber product and admissible $G$-graphs}
 The first step is to describe the fiber product of $\varphi_{B}$ and $\phi$, which Schmitt and van Zelm do combinatorially in terms of admissible $G$-graphs \cite[Proposition 4.3]{SvZ}. We summarize what is needed from their work here. We continue to take $G=\zz/2\zz$, as it simplifies the discussion of the monodromy data.

Given an admissible cover in $\Hb_{g,G,\xi}$, we take the dual graph $\Gamma$ of the source curve $C$. The graph $\Gamma$ inherits an action of $G$. In addition to the graph $\Gamma$ with its $G$ action, we record the stabilizer group of each half-edge and leg. This data allows us to remember the ramification behavior at the nodes and markings. The data of $\Gamma$ with the $G$ action and the stabilizer group of each half-edge and leg is called an admissible $G$ graph and will be denoted by $(\Gamma,G)$. See \cite[Section 3.5]{SvZ} for a completely combinatorial definition.
 
 To each admissible $G$ graph $(\Gamma,G)$, there is a space $\Hb_{(\Gamma,G)}$ and a morphism
 \[
 \varphi_{(\Gamma,G)}:\Hb_{(\Gamma,G)}\rightarrow \Hb_{g,G,\xi}
 \]
 whose image is the closure of the locus of admissible covers with admissible $G$ graph $(\Gamma,G)$, defined as follows. Let $V$ be a set of vertices of $\Gamma$ consisting of one vertex from each orbit of the $G$ action. Let $G_v\subset G$ denote the stabilizer of $v\in V$, and let $\xi_{v}$ be the vector of monodromy data associated to the half-edges and legs incident to $v$ (recall that an admissible $G$ graph includes the data of the stabilizer group of each half-edge and leg, allowing for the assignment of monodromy data). Then
 \[
 \Hb_{(\Gamma,G)}=\prod_{v\in V} \Hb_{g(v),G_v,\xi_{v}},
 \]
and the morphism $\varphi_{(\Gamma,G)}:\Hb_{(\Gamma,G)}\rightarrow \Hb_{g,G,\xi}$ is obtained by gluing according to the graph $\Gamma$. That $\varphi_{(\Gamma,G)}$ is well-defined is established in \cite[Page 30]{SvZ}. There is also a natural morphism
\[
\phi_{(\Gamma,G)}:\Hb_{(\Gamma,G)}\rightarrow \Mb_{\Gamma}
\]
obtained by remembering the source curve and forgetting the $G$ action such that the diagram
\[
\begin{tikzcd}
{\Hb_{(\Gamma,G)}} \arrow[r, "{\varphi_{(\Gamma,G)}}"] \arrow[d, "{\phi_{(\Gamma,G)}}"] & {\Hb_{g,G,\xi}} \arrow[d, "\phi"] \\
\Mb_{\Gamma} \arrow[r, "\varphi_{\Gamma}"]                                              & {\Mb_{g,n}}                     
\end{tikzcd}
\]
commutes, where $\varphi_{\Gamma}$ is the gluing map associated to the stable graph $\Gamma$. 

Recall that a morphism of stable graphs $f:\Gamma\rightarrow \Gamma'$ (also called a $\Gamma'$ structure on $\Gamma$) induces a gluing morphism
\[
\varphi_{f}:\Mb_{\Gamma}\rightarrow \Mb_{\Gamma'}.
\]
If $(\Gamma,G)$ is an admissible $G$ graph, then a $\Gamma'$ structure on $(\Gamma,G)$ is simply a morphism of the underlying stable graphs $f:\Gamma\rightarrow \Gamma'$. Such a morphism includes the data of an injection $\beta:E(\Gamma')\hookrightarrow E(\Gamma)$ on the sets of edges. The $\Gamma'$ structure $f:\Gamma\rightarrow \Gamma'$ is called generic if $G\cdot \mathrm{Im}(\beta)=E(\Gamma)$. That is, every edge of $\Gamma$ is in the $G$ orbit of an edge of $\Gamma'$. Given a generic $\Gamma'$ structure $f:\Gamma\rightarrow \Gamma'$, there is a morphism
\[
\phi_{f}:\Hb_{(\Gamma,G)}\xrightarrow{\phi_{(\Gamma,G)}} \Mb_{\Gamma}\xrightarrow{\varphi_f} \Mb_{\Gamma'}.
\]
Let $\mathfrak{H}_{\Gamma'}$ denote the set of isomorphism classes of admissible $G$ graphs $\Gamma$ with generic $\Gamma'$ structures $f:\Gamma\rightarrow \Gamma'$. Setting $\Gamma'=B$, we have the following specialization of \cite[Proposition 4.3]{SvZ}.
\begin{prop}
The diagram
    \begin{equation}\label{cartesian}
        \begin{tikzcd}
{ \coprod_{(\Gamma,G,f)\in \mathfrak{H}_B} \Hb_{(\Gamma,G)}} \arrow[d, "\coprod\phi_f"] \arrow[r, "{\coprod\varphi_{(\Gamma,G)}}"] & {\Hb_{12,G,\xi}} \arrow[d, "\phi"] \\
\Mb_{B} \arrow[r, "\varphi_B"]                                                                             & {\Mb_{12,22}}                         
\end{tikzcd}
    \end{equation}
is Cartesian.
\end{prop}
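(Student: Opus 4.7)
The plan is to apply \cite[Proposition 4.3]{SvZ} verbatim, taking $(g,G,\xi)=(12,\zz/2\zz,(1^{22}))$ and target stable graph $\Gamma'=B$; since the statement is explicitly labeled a specialization of that result, the real content is to unpack why the indexing set $\mathfrak{H}_B$ is the correct one and why the decomposition of the fiber product matches.

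To justify the proposition conceptually, I would analyze $T$-points of both sides. A $T$-point of the fiber product is a family $C/T$ equipped with a $B$-structure together with an admissible $G$-cover $(\tilde{C}\rightarrow D)/T$ such that the image of $C$ under $\varphi_B$ is identified with $\tilde{C}$ carrying its ramification markings. From this data I extract: (i) the dual graph $\Gamma$ of a geometric fiber of $\tilde{C}$, equipped with the $G$-action from the covering involution and stabilizer decorations on half-edges and legs, yielding an admissible $G$-graph $(\Gamma,G)$; (ii) the morphism $f:\Gamma\rightarrow B$ of stable graphs induced by the $B$-structure on $C=\tilde{C}$; and (iii) the point of $\Hb_{(\Gamma,G)}=\prod_{v\in V}\Hb_{g(v),G_v,\xi_v}$ obtained by restricting the cover to the normalization at preimages of the nodes of $D$. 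Conversely, given $(\Gamma,G,f)\in\mathfrak{H}_B$ and a point of $\Hb_{(\Gamma,G)}$, the morphisms $\varphi_{(\Gamma,G)}$ and $\phi_{(\Gamma,G)}$ glue to produce a compatible pair in the fiber product by construction.

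The key combinatorial observation identifying the indexing is that $f:\Gamma\rightarrow B$ is automatically generic: every node of $\tilde{C}$ either lies over a node of $D$, which corresponds to an edge of $B$ via the $B$-structure, or is identified with such a node under the $G$-action, so $G\cdot\mathrm{Im}(\beta)=E(\Gamma)$. The main obstacle is upgrading this bijection on $T$-points to an isomorphism of stacks with matching automorphism groups, accounting for the disjoint union on the left-hand side being taken over isomorphism classes of the triples $(\Gamma,G,f)$. This stack-theoretic content is the heart of the proof of \cite[Proposition 4.3]{SvZ}, so after confirming that the combinatorial setup matches theirs, no new argument beyond invoking their proposition is required.
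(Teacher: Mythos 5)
Your proof is correct and takes the same route as the paper: the paper offers no independent argument, simply stating the proposition as the specialization of \cite[Proposition 4.3]{SvZ} to $(g,G,\xi,\Gamma')=(12,\zz/2\zz,(1^{22}),B)$, which is exactly what you invoke. One small caveat in your expository sketch: the assertion that $f\colon\Gamma\to B$ is ``automatically generic'' for every point of the fiber product is not quite right---points with more degenerate dual graphs may carry non-generic $B$-structures; genericity is rather the condition that indexes the pieces $\Hb_{(\Gamma,G)}$ of the decomposition, each of which already contains its own further degenerations.
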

\subsection{Excess intersection and the pullback formula}
The excess intersection class associated to the diagram \eqref{cartesian} can be computed separately on each component. The excess bundle $E_f$ is by definition the quotient of the normal bundles $E_f=\phi_f^*N_{\varphi_B}/N_{\varphi_{(\Gamma,G)}}$. The top Chern class of $E_f$ is computed in \cite[Proposition 4.7]{SvZ}. Let $\beta:E(B)\rightarrow E(\Gamma)$ be the map on edges induced by the generic $B$ structure $(\Gamma,G,f)$. Let $N$ be a set of representatives for the orbit of the $G$ action on $E(\Gamma)$ such that $N\subset \im(\beta)$. Then 
\[
c_{\mathrm{top}}(E_f)=\prod_{(h,h')\in \im(\beta)\smallsetminus N} (-\psi_h-\psi_h').
\]
By definition, these $\psi$ classes are pulled back from $\Mb_{B}$ under $\phi_f$, as noted in the discussion following \cite[Theorem 4.9]{SvZ}. In fact, since they are associated to edges of $B$, they are pulled back from $\Mb_{A}$ under the composition
\[
\Hb_{(\Gamma,G)}\xrightarrow{\phi_f} \Mb_{B}\xrightarrow{\pi_B} \Mb_{A},
\]
where the map $\pi_{B}:\Mb_{B}\rightarrow \Mb_{A}$ simply forgets legs. Now consider the diagram
\[
\begin{tikzcd}
{ \coprod_{(\Gamma,G,f)\in \mathfrak{H}_B} \Hb_{(\Gamma,G)}} \arrow[d, "\coprod\phi_f"] \arrow[r, "{\coprod\varphi_{(\Gamma,G)}}"] & {\Hb_{12,G,\xi}} \arrow[d, "\phi"] \\
\Mb_{B} \arrow[d, "\pi_B"] \arrow[r, "\varphi_B"]                                                                             & {\Mb_{12,22}} \arrow[d, "\pi"]     \\
\Mb_{A} \arrow[r, "\varphi_{A}"]                                                                                      & \Mb_{12}\,.                          
\end{tikzcd}
\]
Specializing \cite[Theorem 4.12]{SvZ} to the case at hand, we have 
\begin{equation}\label{excesspush} 
\varphi_{A}^*[\Bb_{12\rightarrow 1,0,0}]=\varphi_A^*\pi_*\phi_*[\Hb_{12,G,\xi}]=\sum_{(\Gamma,G,f)\in \mathfrak{H}_B} \pi_{B*}\left(c_{\mathrm{top}}(E_f)\cdot \phi_{f*}[\Hb_{(\Gamma,G)}]\right).
\end{equation}

\subsection{Pullback computation}
We will now establish equation \eqref{biellipticplustaut} using equation \eqref{excesspush}.  We examine each term in the summation on the right hand side of equation \eqref{excesspush}:
    \[
    \sum_{(\Gamma,G,f)\in \mathfrak{H}_B} \pi_{B*}\left(c_{\mathrm{top}}(E_f)\cdot \phi_{f*}[\Hb_{(\Gamma,G)}]\right).
    \]
    We first study a large class of $(\Gamma,G,f)\in \mathfrak{H}_B$ that contribute only tautological classes.

\begin{lem}\label{tautterms}
    Suppose $(\Gamma,G,f)\in \mathfrak{H}_B$ satisfies any of the following properties:

    \begin{enumerate}
        \item $\Hb_{(\Gamma,G)}$ is not of the expected codimension;
        \item $\Gamma$ has no vertices of genus $2$;
        \item $G$ acts nontrivially on the vertices;
        \item The quotient graph $\Gamma/G$ has first Betti number $1$;
        \item $\Gamma$ has loop edges;
        \item $G$ acts trivially on an edge;
        \item $\Gamma$ has vertices $v_1,v_2$ connected by $n\geq 1$ edges and $n\neq 2$.
    
    \end{enumerate}
Then $\pi_{B*}\left(c_{\mathrm{top}}(E_f)\cdot \phi_{f*}[\Hb_{(\Gamma,G)}]\right)$ is tautological.
\end{lem}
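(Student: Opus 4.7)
The plan is to show, for each of the seven hypotheses, that the class
$\pi_{B*}(c_{\mathrm{top}}(E_f)\cdot \phi_{f*}[\Hb_{(\Gamma,G)}])$ lies in the tautological subring of $\Mb_A$. A uniform first reduction: $c_{\mathrm{top}}(E_f)$ is a polynomial in $\psi$-classes pulled back from $\Mb_B$ (hence tautological), and $\pi_{B*}$ is a composition of forgetful maps, which preserves the tautological subring. So in each case the goal reduces to showing that $\phi_{f*}[\Hb_{(\Gamma,G)}]$ is tautological on $\Mb_B$. Since $\Hb_{(\Gamma,G)} = \prod_{v\in V}\Hb_{g(v),G_v,\xi_v}$ and the source map factors through gluing, tautological-ness can be checked vertex by vertex: Theorem \ref{hyperelliptictaut} handles quotient-genus-$0$ vertices for free, and Theorem \ref{biellipticnontaut} confirms that a quotient-genus-$1$ vertex contributes a non-tautological class only when $g(v)\geq 2$ and $g(v)+n_2(v)=12$.

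Case (1) should be a dimension count: when $\Hb_{(\Gamma,G)}$ fails to be of expected codimension, the class $c_{\mathrm{top}}(E_f)\cdot \phi_{f*}[\Hb_{(\Gamma,G)}]$ lands in a degree incompatible with $\codim \Bb_{12\to 1,0,0}$ in $\Mb_A$, so its pushforward vanishes and is trivially tautological. The bulk of the work goes into (2)--(5), where the idea is that the combinatorial constraints imposed by a generic $B$-structure $f:\Gamma\to B$ together with the Riemann--Hurwitz relation at each $G$-orbit rule out the existence of a vertex of $\Gamma$ satisfying van Zelm's numerical threshold. Specifically, the absence of a genus-$2$ vertex (2) forces any potentially non-tautological vertex to have $g(v)\geq 3$ while the generic $B$-structure pushes $n_2(v)$ strictly below $12-g(v)$; a nontrivial $G$-action on vertices (3) shrinks the set of fixed vertices that could carry a bielliptic factor and forces the remaining genus to redistribute incompatibly; first Betti number $1$ on $\Gamma/G$ (4) diverts a unit of genus into the topological cycle of the target, which cannot coexist with a vertex-level bielliptic factor of the required shape; and a loop edge in $\Gamma$ (5) produces a local contribution that is either hyperelliptic (tautological by Theorem \ref{hyperelliptictaut}) or inconsistent with the $B$-structure.

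Cases (6) and (7) are local arguments near nodes: a trivial $G$-action on an edge means the cover is locally a pair of $G$-conjugate branches over a single node, producing a hyperelliptic local factor and hence a tautological contribution; and two vertices joined by $n\neq 2$ edges cannot support the $\zz/2\zz$-action in a way compatible with a bielliptic source, so the local factor is again hyperelliptic. The main obstacle will be the combinatorial case analysis in (2)--(5): enumerating and excluding all admissible $G$-graphs that could in principle satisfy van Zelm's numerical constraint at a non-genus-$2$ vertex requires carefully tracking the interplay between the generic $B$-structure, the $G$-action, and the genus and leg counts inherited from the fact that the total source genus is $12$ and the monodromy data is $\xi=(1^{22})$.
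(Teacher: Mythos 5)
Your uniform first reduction---showing that $\phi_{f*}[\Hb_{(\Gamma,G)}]$ is tautological on $\Mb_B$---is a strict strengthening that the paper neither proves nor needs, and it throws away the one piece of leverage the entire strategy depends on: the fact that the target is $\Mb_A=\Mb_{2,20}$, where Theorem \ref{PetersenTommasi} gives complete control of the cohomology (all even cohomology outside degree $22$ is tautological, and all pushforwards from the boundary are tautological). Your proposal never invokes these facts, and without them none of the seven cases can be closed.

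For case (1), the claim that the class vanishes for degree reasons is false: the excess intersection formula exists precisely to correct the degree, so $c_{\mathrm{top}}(E_f)\cdot\phi_{f*}[\Hb_{(\Gamma,G)}]$ always has codimension $11$ after the $\pi_{B*}$. The actual argument is that the positive rank of $E_f$ forces $\pi_{B*}\phi_{f*}[\Hb_{(\Gamma,G)}]$ to have codimension strictly less than $11$ in $\Mb_{2,20}$, hence it lies in $\H^{k}(\Mb_{2,20})$ with $k<22$, which by Theorem \ref{PetersenTommasi}(2) is entirely tautological; multiplying by a pulled-back $\psi$-polynomial and applying the projection formula finishes the case. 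For cases (2)--(5), the ``van Zelm numerical threshold'' reasoning has a fundamental logical gap: Theorem \ref{biellipticnontaut} asserts that certain bielliptic classes are non-tautological, but it gives no information whatsoever about whether \emph{other} classes are tautological, so excluding van Zelm's numerics does not yield tautologicality. What actually happens in the paper is much simpler: in (2) and (3) one shows that after forgetting legs the class is supported on the boundary of $\Mb_{2,20}$, which is tautological by Theorem \ref{PetersenTommasi}(3); in (4) the genus-$2$ vertex maps to a genus-$0$ vertex of the quotient, making the corresponding factor a hyperelliptic Hurwitz space, handled by Theorem \ref{hyperelliptictaut}. Cases (5), (6), (7) are not independent local arguments but are eliminated by contradiction with (4): a loop edge forces a loop in $\Gamma/G$; a $G$-fixed edge cannot be a loop edge of $B$ (the generic $B$-structure requires it to be one of $B$'s loop edges, which it is not), forcing extra cycles in $\Gamma/G$; and $n$ edges between two vertices with $n\neq 2$ force either a $G$-fixed edge ($n$ odd, reducing to (6)) or Betti number $\geq 1$ ($n\geq 4$, reducing to (4)). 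You should rebuild the proof around the three pillars actually available: Theorem \ref{PetersenTommasi}(2), Theorem \ref{PetersenTommasi}(3), and Theorem \ref{hyperelliptictaut}.
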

\begin{proof}
    Suppose $(\Gamma,G,f)$ is such that (1) holds. Then the excess bundle $E_f$ is of rank at least $1$. Because $c_{\mathrm{top}}(E_f)$ is pulled back from $\Mb_{A}$, the term
    \[
    \pi_{B*}\left(c_{\mathrm{top}}(E_f)\cdot \phi_{f*}[\Hb_{(\Gamma,G)}]\right)
    \]
    is a product of algebraic classes of lower codimension on $\Mb_{A}=\Mb_{2,20}$ by the projection formula. Hence, by Theorem \ref{PetersenTommasi}(2), $\pi_{B*}\left(c_{\mathrm{top}}(E_f)\cdot \phi_{f*}[\Hb_{(\Gamma,G)}]\right)$ is tautological. 
    
    From now on, we suppose $(\Gamma,G,f)$ does not satisfy property (1). Then there is no excess contribution, and we study only the terms of the form $\pi_{B*}\phi_{f*}[\Hb_{(\Gamma,G)}]$. Now suppose $(\Gamma,G,f)$ is such that (2) holds. First, assume that $\Gamma$ has a vertex of genus $1$. Then $\pi_{B*}\phi_{f*}[\Hb_{(\Gamma,G)}]$ is supported on the boundary of $\Mb_{A}$, and hence is tautological by Theorem \ref{PetersenTommasi}(3). We argue similarly if all vertices of $\Gamma$ are of genus $0$.

    From now on, we assume $(\Gamma,G,f)$ does not satisfy property (2), and so has a unique vertex of genus $2$. Suppose that $(\Gamma,G,f)$ is such that $G$ acts non-trivially on the vertices of $\Gamma$. We pick a pair $v_1$ and $v_2$ of genus $0$ vertices interchanged by the $G$ action. The $G$ action fixes all of the legs because the monodromy data is $\xi=(1^{22})$. Therefore, there can be no legs attached to $v_1$ and $v_2$. Under the pushforward $\pi_{B*}$ forgetting the legs, the vertices $v_1$ and $v_2$ are not contracted, meaning $\pi_{B*}\phi_{f*}[\Hb_{(\Gamma,G)}]$ is supported on the boundary of $\Mb_{A}$, and is thus tautological.

    From now on, we assume $(\Gamma,G,f)$ does not satisfy property (3). Suppose that $\Gamma/G$ has first Betti number $1$. It follows that every vertex of the quotient graph is of genus $0$, as the target curve has genus $1$. Therefore, in the product decomposition 
    \[
    \Hb_{(\Gamma,G)}=\prod_{v\in V} \Hb_{g(v),G_v,\xi_v},
    \]
    the term corresponding to the genus $2$ vertex parametrizes admissible hyperelliptic covers. Such spaces contribute only tautological classes by Theorem \ref{hyperelliptictaut}. 
    
    From now on, we assume $(\Gamma,G,f)$ does not satisfy property (4). Suppose that $\Gamma$ has a vertex with a loop edge. The quotient graph $\Gamma/G$ must have a loop edge because $G$ acts trivially on the vertices. This contradicts the assumption that $(\Gamma,G,f)$ does not satisfy property (4). 

    From now on, we assume $(\Gamma,G,f)$ does not satisfy property (5). Consider the case when $\Gamma$ has vertices $v_1$ and $v_2$ connected by an edge $e$ that is fixed under the $G$ action. Because the $B$ structure is generic, $e$ must be an edge of $B$. But $e$ is not a loop edge, whereas all the edges of $B$ are. Therefore, there must exist another path in $\Gamma$ connecting $v_1$ and $v_2$, in which all constituent edges are contracted by the $B$ structure $f:\Gamma\rightarrow B$. But then the quotient graph $\Gamma/G$ must have nonzero first Betti number, as the images of $v_1$ and $v_2$ are connected by at least two edges, contradicting that $(\Gamma,G,f)$ does not satisfy property (4).

    From now on, we assume $(\Gamma,G,f)$ does not satisfy property (6). Suppose that $\Gamma$ has vertices $v_1$ and $v_2$ connected by $n\geq 1$ edges $e_1,\dots,e_n$. If $n$ is odd, then because $G$ acts trivially on the vertices, it must also fix an edge, violating the assumption that it does not satisfy property (6). Therefore, $n$ is even. If $n\geq 4$, then the quotient graph $\Gamma/G$ has first Betti number at least $1$, violating the assumption that it does not satisfy property (4).

\end{proof}

\begin{proof}[Proof of Equation \eqref{biellipticplustaut}]
We calculate using equation \eqref{excesspush}:
\[
\varphi_{A}^*[\Bb_{12\rightarrow 1,0,0}]=\varphi_A^*\pi_*\phi_*[\Hb_{12,G,\xi}]=\sum_{(\Gamma,G,f)\in \mathfrak{H}_B} \pi_{B*}\left(c_{\mathrm{top}}(E_f)\cdot \phi_{f*}[\Hb_{(\Gamma,G)}]\right).
\]
We need only to consider the terms on the right hand side satisfying none of the properties listed in Lemma \ref{tautterms}. By the negation of property (1), such terms are of the form
\[
\pi_{B*}\phi_{f*}[\Hb_{(\Gamma,G)}].
\]
By the negation of properties (2) and (5), there is one genus $2$ vertex, all other vertices are of genus $0$, and there are no loops on any vertex. 
By the negation of property (4), the first Betti number of the quotient graph $\Gamma/G$ is $0$. Hence, the image of the unique vertex of genus $2$ is a vertex of genus $1$. By the negation of property (3), the vertices of genus $0$ map bijectively to vertices of genus $0$ of $\Gamma/G$. By Riemann--Hurwitz, we see that every vertex of $\Gamma$ has two markings. By the negation of properties (6) and (7), any two vertices $v_1$ and $v_2$ of $\Gamma$ are either not connected by edges or are connected by exactly two edges pairwise switched by the $G$ action.


We consider the case where there is a genus $0$ vertex $v_1$ not connected to the genus $2$ vertex. If $v_1$ is connected to at least two other vertices, then the component corresponding to $v_1$ is not contracted under the map forgetting the marked points. It follows that $\pi_{B*}\phi_{f*}[\Hb_{(\Gamma,G)}]$ is supported on the boundary, and is thus tautological. If $v_1$ is connected to only one other vertex, then under the map forgetting the marked points, $v_1$ is contracted. The resulting graph has a genus $0$ vertex with a loop, and thus $\pi_{B*}\phi_{f*}[\Hb_{(\Gamma,G)}]$ is supported on the boundary. 

Finally, we consider the case when no two genus $0$ vertices are connected by edges. Upon forgetting the marked points, all of the genus $0$ components are contracted, and the cycle $\pi_{B*}\phi_{f*}[\Hb_{(\Gamma,G)}]$ is a positive multiple of $[\Bb_{2\rightarrow 1,0,20}]$.

\end{proof}

\section{Proof of Theorem \ref{thm:positivegorenstein}}\label{sec:posgorenstein}
We prove Theorem \ref{thm:positivegorenstein} by induction on $g$ and $n$. The proof idea is analogous to \cite[Lemma 4.1]{CL-CKgP}, but extra care is needed for possible contributions from odd cohomology in the boundary. The statement holds for $g=0$ \cite{Keel}, $g=1$ \cite{Petersengenusone}, and $g=2$ \cite{Petersen}.
For any $g,n$ and $k$, there is an exact sequence 
\begin{equation}\label{weightexactsequence}
\H^{2k-2}(\tilde{\partial \M_{g,n}})\rightarrow \H^{2k}(\Mb_{g,n})\rightarrow \mathsf{W}_{2k}\H^{2k}(\M_{g,n})\rightarrow 0,
\end{equation}
where $\tilde{\partial \M_{g,n}}$ is the normalization of the boundary. For $3\leq g\leq 8$, $n< d(g)$, and $k$ arbitrary, we have that 
\[
\mathsf{W}_{2k}\H^{2k}(\M_{g,n})=\RH^{2k}(\M_{g,n})
\]
by \cite[Theorem 1.4]{CL-CKgP} and \cite[Theorem 1.10 and Lemma 4.3]{CLP-ste}. Therefore, using the exact sequence \eqref{weightexactsequence}, it suffices to study the boundary contribution.

\begin{table}[h!]
\centering
\begin{tabular}{|c|c|c|c|c|c|c|c|c|c|} 
 \hline
 $g$ & $0$ & $1$ & $2$ & $3$ & $4$ & $5$ & $6$ & $7$ \\
 \hline
 $e(g)$ & $\infty$ & $11$ & $10$ & $9$ & $7$ & $5$ & $3$ & $1$  \\ 
\hline
\end{tabular}
\vspace{.1in}
\caption{The odd cohomology of $\Mb_{g,n}$ vanishes for $n < e(g)$.}
\label{allcohtaut}
\end{table}

Up to quotients by finite groups, the normalization of the boundary $\tilde{\partial \M_{g,n}}$ is the disjoint union of spaces of the form $\Mb_{g-1,n+2}$ and $\Mb_{g_1,n_1+1}\times \Mb_{g_2,n_2+1}$, where $g_1+g_2=g$ and $n_1+n_2=n$. By induction on $g$ and $n$, the even cohomology of $\Mb_{g-1,n+2}$ and $\Mb_{g_1,n_1+1}$ and $\Mb_{g_2,n_2+1}$ are tautological. We thus only need to consider the summands in the K\"unneth decomposition
\[
\H^{2k-2}(\Mb_{g_1,n_1+1}\times \Mb_{g_2,n_2+1})\cong \bigoplus_{p+q=2k-2} \H^{p}(\Mb_{g_1,n_1+1})\otimes \H^{q}(\Mb_{g_2,n_2+1})
\]
when both $p$ and $q$ are odd. It follows from \cite[Theorem 1.4]{CL-CKgP} that for $n_1+1<e(g_1)$ (respectively, $n_2+1<e(g_2)$), as defined in Table \ref{allcohtaut}, all of the odd cohomology of $\Mb_{g_1,n_1+1}$ (respectively, $\Mb_{g_2,n_2+1}$) vanishes. Therefore, for $3\leq g\leq 8$ and $n<d(g)$, the terms \[\H^{p}(\Mb_{g_1,n_1+1})\otimes \H^{q}(\Mb_{g_2,n_2+1})\] with $p$ and $q$ odd vanish identically, as either $\H^{p}(\Mb_{g_1,n_1+1})$ or $\H^{q}(\Mb_{g_2,n_2+1})$ is zero. \qed 
\bibliographystyle{amsplain}
\bibliography{refs}

\providecommand{\bysame}{\leavevmode\hbox to3em{\hrulefill}\thinspace}
\providecommand{\MR}{\relax\ifhmode\unskip\space\fi MR }
\providecommand{\MRhref}[2]{%
  \href{http://www.ams.org/mathscinet-getitem?mr=#1}{#2}
}
\providecommand{\href}[2]{#2}
\begin{thebibliography}{10}

\bibitem{AGNES}
Veronica Arena, Samir Canning, Emily Clader, Richard Haburcak, Amy~Q Li, Siao~Chi Mok, and Carolina Tamborini, \emph{Holomorphic forms and non-tautological cycles on moduli spaces of curves}, arXiv preprint arXiv:2402.03874 (2024).

\bibitem{CL-CKgP}
Samir Canning and Hannah Larson, \emph{On the {C}how and cohomology rings of moduli spaces of stable curves}, arXiv preprint arXiv:2208.02357 (2022).

\bibitem{CLP11}
Samir Canning, Hannah Larson, and Sam Payne, \emph{The eleventh cohomology group of {$\overline{\mathcal{M}}_{g,n}$}}, Forum Math. Sigma \textbf{11} (2023), Paper No. e62, 18. \MR{4616435}

\bibitem{CLP-ste}
\bysame, \emph{Extensions of tautological rings and motivic structures in the cohomology of {$\overline{\mathcal{M}}_{g,n}$}}, arXiv preprint arXiv:2307.08830 (2023).

\bibitem{CLSGorenstein}
Samir Canning, Hannah Larson, and Johannes Schmitt, \emph{The {G}orenstein property and {P}ixton's conjecture for compact type moduli}, in preparation (2024).

\bibitem{COPabelian}
Samir Canning, Dragos Oprea, and Rahul Pandharipande, \emph{Cycles on the moduli space of principally polarized abelian varieties}, in preparation (2024).

\bibitem{CladerJanda}
Emily Clader and Felix Janda, \emph{Pixton's double ramification cycle relations}, Geom. Topol. \textbf{22} (2018), no.~2, 1069--1108. \MR{3748684}

\bibitem{admcycles}
Vincent Delecroix, Johannes Schmitt, and Jason van Zelm, \emph{admcycles---a {S}age package for calculations in the tautological ring of the moduli space of stable curves}, J. Softw. Algebra Geom. \textbf{11} (2021), no.~1, 89--112. \MR{4387186}

\bibitem{FPspeculation}
C.~Faber and R.~Pandharipande, \emph{Logarithmic series and {H}odge integrals in the tautological ring}, vol.~48, 2000, With an appendix by Don Zagier, Dedicated to William Fulton on the occasion of his 60th birthday, pp.~215--252. \MR{1786488}

\bibitem{FaberPandharipande}
\bysame, \emph{Relative maps and tautological classes}, J. Eur. Math. Soc. (JEMS) \textbf{7} (2005), no.~1, 13--49. \MR{2120989}

\bibitem{Faber}
Carel Faber, \emph{A conjectural description of the tautological ring of the moduli space of curves}, Moduli of curves and abelian varieties, Aspects Math., vol. E33, Friedr. Vieweg, Braunschweig, 1999, pp.~109--129. \MR{1722541}

\bibitem{GraberPandharipande}
T.~Graber and R.~Pandharipande, \emph{Constructions of nontautological classes on moduli spaces of curves}, Michigan Math. J. \textbf{51} (2003), no.~1, 93--109. \MR{1960923}

\bibitem{GraberVakil}
Tom Graber and Ravi Vakil, \emph{On the tautological ring of {$\overline{\mathscr M}_{g,n}$}}, Turkish J. Math. \textbf{25} (2001), no.~1, 237--243. \MR{1829089}

\bibitem{HainLooijenga}
Richard Hain and Eduard Looijenga, \emph{Mapping class groups and moduli spaces of curves}, Algebraic geometry---{S}anta {C}ruz 1995, Proc. Sympos. Pure Math., vol. 62, Part 2, Amer. Math. Soc., Providence, RI, 1997, pp.~97--142. \MR{1492535}

\bibitem{Janda}
Felix Janda, \emph{Relations on {$\overline M_{g,n}$} via equivariant {G}romov-{W}itten theory of {$\mathbb{P}^1$}}, Algebr. Geom. \textbf{4} (2017), no.~3, 311--336. \MR{3652083}

\bibitem{Jandarelations}
\bysame, \emph{Frobenius manifolds near the discriminant and relations in the tautological ring}, Lett. Math. Phys. \textbf{108} (2018), no.~7, 1649--1675. \MR{3802725}

\bibitem{Keel}
Sean Keel, \emph{Intersection theory of moduli space of stable {$n$}-pointed curves of genus zero}, Trans. Amer. Math. Soc. \textbf{330} (1992), no.~2, 545--574. \MR{1034665}

\bibitem{Lian}
Carl Lian, \emph{The {$\mathcal {H}$}-tautological ring}, Selecta Math. (N.S.) \textbf{27} (2021), no.~5, Paper No. 96, 74. \MR{4320845}

\bibitem{Liannontaut}
\bysame, \emph{Non-tautological {H}urwitz cycles}, Math. Z. \textbf{301} (2022), no.~1, 173--198. \MR{4405647}

\bibitem{Pan3questions}
R.~Pandharipande, \emph{Three questions in {G}romov-{W}itten theory}, Proceedings of the {I}nternational {C}ongress of {M}athematicians, {V}ol. {II} ({B}eijing, 2002), Higher Ed. Press, Beijing, 2002, pp.~503--512. \MR{1957060}

\bibitem{Pandharipandecalculus}
Rahul Pandharipande, \emph{A calculus for the moduli space of curves}, Algebraic geometry: {S}alt {L}ake {C}ity 2015, Proc. Sympos. Pure Math., vol. 97.1, Amer. Math. Soc., Providence, RI, 2018, pp.~459--487. \MR{3821159}

\bibitem{PandharipandePixtonZvonkine}
Rahul Pandharipande, Aaron Pixton, and Dimitri Zvonkine, \emph{Relations on {$\overline{\mathscr M}_{g,n}$} via {$3$}-spin structures}, J. Amer. Math. Soc. \textbf{28} (2015), no.~1, 279--309. \MR{3264769}

\bibitem{Petersengenusone}
Dan Petersen, \emph{The structure of the tautological ring in genus one}, Duke Math. J. \textbf{163} (2014), no.~4, 777--793. \MR{3178432}

\bibitem{Petersen}
\bysame, \emph{Tautological rings of spaces of pointed genus two curves of compact type}, Compos. Math. \textbf{152} (2016), no.~7, 1398--1420. \MR{3530445}

\bibitem{PetersenTommasi}
Dan Petersen and Orsola Tommasi, \emph{The {G}orenstein conjecture fails for the tautological ring of {$\overline{\mathcal M}_{2,n}$}}, Invent. Math. \textbf{196} (2014), no.~1, 139--161. \MR{3179574}

\bibitem{Pixton}
Aaron Pixton, \emph{The tautological ring of the moduli space of curves}, ProQuest LLC, Ann Arbor, MI, 2013, Thesis (Ph.D.)--Princeton University. \MR{3153424}

\bibitem{SvZ}
Johannes Schmitt and Jason van Zelm, \emph{Intersections of loci of admissible covers with tautological classes}, Selecta Math. (N.S.) \textbf{26} (2020), no.~5, Paper No. 79, 69. \MR{4177576}

\bibitem{vanZelm}
Jason van Zelm, \emph{Nontautological bielliptic cycles}, Pacific J. Math. \textbf{294} (2018), no.~2, 495--504. \MR{3770123}

\end{thebibliography}
\end{document}